\title{Recovering coefficients in a system of semilinear Helmholtz equations from internal data}
\author{
Kui Ren\thanks{
		Department of Applied Physics and Applied Mathematics, Columbia University, New York, NY 10027; \href{mailto:kr2002@columbia.edu}{kr2002@columbia.edu}
	}
\and
Nathan Soedjak\thanks{
		Department of Applied Physics and Applied Mathematics, Columbia University, New York, NY 10027; \href{mailto:ns3572@columbia.edu}{ns3572@columbia.edu}
	}
}
\begin{document}

\maketitle


\begin{abstract}
    We study an inverse problem for a coupled system of semilinear Helmholtz equations where we are interested in reconstructing multiple coefficients in the system from internal data measured in applications such as thermoacoustic imaging. We derive results on the uniqueness and stability of the inverse problem in the case of small boundary data based on the technique of first- and higher-order linearization. Numerical simulations are provided to illustrate the quality of reconstructions that can be expected from noisy data.
\end{abstract}
  
\begin{keywords}
	Inverse problems, semilinear Helmholtz equation, uniqueness, stability, thermoacoustic imaging, nonlinear physics, second harmonic generation
\end{keywords}

\begin{AMS}
	35R30, 49M41, 65N21, 78A46
\end{AMS}


\section{Introduction}

Let $\Omega\subset\bbR^d$ ($d\ge 2$) be a bounded domain with smooth boundary $\partial\Omega$. We consider the following system of coupled semilinear Helmholtz equations 
\begin{equation}\label{EQ:SHG}
	\begin{array}{rcll}
  	\Delta u+ k^2 (1+\eta) u +ik\sigma u&=& -k^2 \gamma u^* v, & \mbox{in}\ \ \Omega\\
  	\Delta v+ (2k)^2 (1+\eta) v +i2k\sigma v &=& -(2k)^2 \gamma u^2,  &\mbox{in}\ \ \Omega\\
	u = g, && v = h, & \mbox{on}\ \partial\Omega
	\end{array}
\end{equation}
where $\Delta$ denotes the standard Laplacian operator, and $u^*$ denotes the complex conjugate of $u$. This system serves as a simplified model of the second harmonic generation process in a heterogeneous medium excited by an incident wave source $g$~\cite{BoWo-Book99,Boyd-Book20,BoLiMaSc-IP17,YuYa-JOSA13,SzKi-JOSA18,FrTeGaBlNeMa-JOSA15,ZeHoLiKoMo-PRB09}. The fields $u$ and $v$ are, respectively, the incident field (with wave number $k$) and the generated second-harmonics (with wave number $2k$). The medium has first- and second-order susceptibility $\eta$ and $\gamma$, respectively, and an absorption coefficient $\sigma$. 

We are interested in inverse problems to system~\eqref{EQ:SHG} where the objective is to reconstruct the coefficients in the system from data of the form:
\begin{equation}\label{EQ:Data Form}
	H(\bx) = \Gamma(\bx) \sigma(\bx) (|u|^2+|v|^2),\qquad \bx\in\Omega\,.
\end{equation}
where $\Gamma(\bx)$ is an additional physical coefficient that appears in the data generation process. This inverse problem is motivated by applications in thermoacoustic imaging, a hybrid imaging modality where thermoacoustic effect is used to couple high-resolution ultrasound imaging to microwave imaging to achieve high-resolution and high-contrast imaging of physical properties of heterogeneous media in the microwave regime. In thermoacoustic imaging, $H$ is the initial pressure field of the ultrasound generated by the thermoacoustic effect. It is proportional to the local energy absorbed by the medium from microwave illumination, that is, $\sigma(|u|^2+|v|^2)$. The proportional constant $\Gamma(\bx)$ is called the Gr\"uneisen coefficient~\cite{BaRe-IP12}. We refer interested readers to~\cite{BaZh-IP14,BeBrPr-IP19,CrLiSh-M2AS23,JeEl-IP20} and references therein for the recent development in the modeling and computational aspects of thermoacoustic imaging.

There are two main differences between the inverse problem we study here and those that exist in the literature. First, our model~\eqref{EQ:SHG} takes into account second-harmonic generation, a nonlinear mechanism that is often used for the imaging of molecular signatures of particular proteins in biomedical applications. Second, the objective of our inverse problem includes the Gr\"uneisen coefficient $\Gamma$, which is mostly ignored in the previous studies of quantitative thermoacoustic imaging~\cite{AkBeDaElLiMi-JIIP17,AmGaJiNg-ARMA12,BaZh-IP14,BaReUhZh-IP11}.

The fact that the absorbed energy is in the form of $\sigma(\bx)(|u|^2+|v|^2)$ has to be understood from the physics of the thermoacoustic process. In a nutshell, consider the full time-dependent forms of the incident (at frequency $\omega$) and generated (at frequency $2\omega$) electric wave of the form:
\begin{align*}
    u(\bx,t) &= 2\Re(u(\bx)e^{-i\omega t}) = 2|u(\bx)|\cos(\varphi_u(\bx) - \omega t)\,,\\
    v(\bx,t) &= 2\Re(v(\bx)e^{-i2\omega t}) = 2|v(\bx)|\cos(\varphi_v(\bx) - 2\omega t)\,,
\end{align*}
where $\varphi_u$ (resp. $\varphi_v$) is the phase of $u$ (resp. $v$). Let $I(\bx)$ denote the energy density of the total electric field at the location $\bx$, averaged over a period of length $T:=2\pi/\omega$. It is then clear that
\begin{align*}
    I(\bx) &= \frac{1}{T}\int_0^T \frac{1}{2} |u(\bx,t)+v(\bx,t)|^2 \,dt = \frac{1}{2T}\int_0^T u(\bx,t)^2 + v(\bx,t)^2 + 2u(\bx,t)v(\bx,t)\, dt\\
    &=4|u(\bx)|^2\frac{1}{2T}\int_0^T\cos^2(\varphi_u(\bx) - \omega t)\,dt + 4|v(\bx)|^2\frac{1}{2T}\int_0^T\cos^2(\varphi_v(\bx) - 2\omega t)\,dt\\
    &\qquad + 8|u(\bx)||v(\bx)|\frac{1}{2T}\int_0^T \cos(\varphi_u(\bx) - \omega t)\cos(\varphi_v(\bx) - 2\omega t)\,dt\\
    &=(|u(\bx)|^2+|v(\bx)|^2)\,,
\end{align*}
where we have used the standard trigonometic identity $\cos(x)\cos(y)=\frac{1}{2}(\cos(x+y)+\cos(x-y))$ to simplify the integrals.
Therefore, the cross-term vanishes due to orthogonality. The absorbed radiation at location $\bx$ is thus $\sigma(\bx)I(\bx)=\sigma(\bx)(|u|^2+|v|^2)$. This simple calculation provides a (maybe overly-simplified) justification of the data~\eqref{EQ:Data Form} as the internal data in thermoacoustic imaging with second-harmonic generation.

The main objective of this paper is to study the problem of determining information on $(\Gamma, \eta, \gamma, \sigma)$ from information encoded in the map:
\begin{equation}\label{EQ:Data}
	\Lambda_{\Gamma, \eta, \gamma, \sigma}: (g,h) \mapsto H.
\end{equation}
We will show that under appropriate conditions, the data~\eqref{EQ:Data} allow unique (and stable, in an appropriate sense) reconstruction of the coefficients $(\Gamma, \eta, \gamma, \sigma)$. Moreover, there is an explicit reconstruction method to recover $(\Gamma, \eta, \sigma)$ (see the proof of Theorem~\ref{THM:Uniqueness}), and another explicit method to reconstruct $\gamma$ (see the remarks below ~\eqref{EQ:Lin Sys}).

The paper is organized as follows. We first review in Section~\ref{SEC:Forward} some of the elementary properties of the model~\eqref{EQ:SHG} that we will use in our analysis. We also introduce the multilinearization method as the basis of the study of the inverse problems. We then derive the uniqueness and stability of reconstructing $(\Gamma, \eta, \sigma)$ in Section~\ref{SEC:IP Step I} and study the problem of reconstructing $\gamma$ in Section~\ref{SEC:IP Step II}. Numerical simulations based on synthetic data will be provided in Section~\ref{SEC:Numerics} to demonstrate the quality of the reconstructions that can be achieved in such an inverse problem before we conclude the paper with additional remarks in Section~\ref{SEC:Concl}.

\section{The forward model and its linearization}
\label{SEC:Forward}

Throughout the paper, we make the following assumptions on the domain $\Omega$ and the physical coefficients involved in the inverse problem:
\begin{enumerate}[label=($\cA$-\roman*)]
\item \label{ASS:Domain} The domain $\Omega$ is bounded with smooth boundary $\partial\Omega$.
\item \label{ASS:Coefficients} The coefficients $\Gamma,\eta,\sigma,\gamma$ all lie in the set 
\begin{align*}
\cY := \{f\in \cC^2(\overline{\Omega}; \bbR): c_1\le f\le c_2\ \mbox{in}\ \overline{\Omega}\}   
\end{align*}
for some $c_1>0$ and $c_2>0$.
\end{enumerate}
While it is clear that such assumptions can be slightly relaxed for the technical results in the rest of the paper to still hold, we choose the current form to make the presentation of the paper easy to follow.

\subsection{Well-posedness of the forward model}
\label{SUBSEC:Well-posedness}

We start with the well-posedness of the semilinear system~\eqref{EQ:SHG} for small boundary data.
\begin{theorem}\label{THM:Well-Posedness} 
Let $\alpha\in (0,1)$. Under the assumptions in~ \ref{ASS:Domain} and~\ref{ASS:Coefficients}, there exist $\eps>0$ and $\delta>0$ such that for all $g,h\in \cC^{2,\alpha}(\partial\Omega;\bbC)$ with $\|g\|_{\cC^{2,\alpha}(\partial\Omega)}<\eps$ and $\|h\|_{\cC^{2,\alpha}(\partial\Omega)}<\eps$, the boundary value problem \eqref{EQ:SHG} has a unique solution 
\begin{align*}
(u,v)\in \{f\in \cC^{2,\alpha}(\overline{\Omega};\bbC): \|f\|_{\cC^{2,\alpha}(\overline{\Omega})}\le \delta\}^2.    
\end{align*}
Moreover, there exists a constant $C=C(\alpha,\Omega,\eta,\sigma,\gamma)$ such that this unique solution satisfies the estimates
\begin{equation}\label{EQ:SHG Estimates}
\begin{aligned}
    \|u\|_{\cC^{2,\alpha}(\overline{\Omega})} &\le C( \|g\|_{\cC^{2,\alpha}(\partial\Omega)} + \|h\|_{\cC^{2,\alpha}(\partial\Omega)}),\\
    \|v\|_{\cC^{2,\alpha}(\overline{\Omega})} &\le C( \|g\|_{\cC^{2,\alpha}(\partial\Omega)} + \|h\|_{\cC^{2,\alpha}(\partial\Omega)}).
\end{aligned}
\end{equation}
\end{theorem}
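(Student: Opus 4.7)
The plan is to set this up as a Banach fixed-point problem, treating the nonlinearities $-k^2\gamma u^* v$ and $-(2k)^2 \gamma u^2$ as small quadratic perturbations of the linear Helmholtz operators at wavenumbers $k$ and $2k$. The preliminary step is linear well-posedness: for $\kappa\in\{k,2k\}$ and $(F,\phi)\in \cC^{0,\alpha}(\overline{\Omega};\bbC)\times\cC^{2,\alpha}(\partial\Omega;\bbC)$, I want to solve
\begin{equation*}
L_\kappa w := \Delta w + \kappa^2(1+\eta)w + i\kappa\sigma w = F\ \text{in}\ \Omega,\qquad w=\phi\ \text{on}\ \partial\Omega,
\end{equation*}
and obtain the Schauder-type estimate
\begin{equation*}
\|w\|_{\cC^{2,\alpha}(\overline{\Omega})}\le C\bigl(\|F\|_{\cC^{0,\alpha}(\overline{\Omega})}+\|\phi\|_{\cC^{2,\alpha}(\partial\Omega)}\bigr).
\end{equation*}
Uniqueness follows by pairing the homogeneous equation with $\overline w$, integrating by parts, and taking imaginary parts: the assumption $\sigma\ge c_1>0$ in \ref{ASS:Coefficients} forces $\int_\Omega \sigma|w|^2 = 0$, hence $w\equiv 0$. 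Combined with the Fredholm alternative and standard elliptic regularity for smooth $\partial\Omega$, this yields the solution operator with the stated bound.

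With this linear solver in hand, I define $T:(u,v)\mapsto(\tilde u,\tilde v)$ by $L_k\tilde u = -k^2\gamma u^* v$, $\tilde u|_{\partial\Omega}=g$, and $L_{2k}\tilde v = -(2k)^2\gamma u^2$, $\tilde v|_{\partial\Omega}=h$. On the closed ball
\begin{equation*}
X_\delta:=\{(u,v)\in \cC^{2,\alpha}(\overline{\Omega};\bbC)^2 : \|u\|_{\cC^{2,\alpha}},\|v\|_{\cC^{2,\alpha}}\le\delta\},
\end{equation*}
the Banach algebra property of $\cC^{2,\alpha}(\overline{\Omega})$ combined with the boundedness of $\gamma$ and the linear estimate gives $\|\tilde u\|_{\cC^{2,\alpha}}+\|\tilde v\|_{\cC^{2,\alpha}}\le C(\delta^2+\eps)$, so choosing $\delta$ small and then $\eps\le\delta/(2C)$ ensures $T(X_\delta)\subset X_\delta$. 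For contraction, the differences $\tilde u_1-\tilde u_2$ and $\tilde v_1-\tilde v_2$ satisfy the linear problems with zero boundary data and right-hand sides that factor as $-k^2\gamma\bigl(u_1^*(v_1-v_2)+(u_1-u_2)^* v_2\bigr)$ and $-(2k)^2\gamma(u_1-u_2)(u_1+u_2)$; both are controlled by $C\delta\,\|(u_1,v_1)-(u_2,v_2)\|_{(\cC^{2,\alpha})^2}$ via the algebra property, so a further shrinking of $\delta$ makes $T$ a strict contraction. Banach's fixed-point theorem then produces the unique $(u,v)\in X_\delta$ solving \eqref{EQ:SHG}, and substituting the fixed-point identity $(u,v)=T(u,v)$ into the linear bound while absorbing the $C\delta$ factor multiplying $\|u\|,\|v\|$ into the constant yields the estimates \eqref{EQ:SHG Estimates}.

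The main technical point is the linear step rather than the fixed-point step: Helmholtz operators can in principle admit nontrivial kernels (trapped modes at resonant frequencies), and one must verify this is excluded at both wavenumbers $k$ and $2k$ simultaneously. This is precisely where the strict positivity of $\sigma$ earns its keep via the imaginary-part energy identity above; without an absorption term one would have to impose an explicit non-resonance hypothesis on $(k,\eta)$ and $(2k,\eta)$. Once the linear theory is in place, the fact that both nonlinearities are purely quadratic provides exactly the small-data structure needed to close the contraction, so I do not anticipate further difficulties.
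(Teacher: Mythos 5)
Your proposal is correct and follows essentially the same route as the paper's own proof in Appendix~\ref{APP:Well-posedness}: linear well-posedness of the Helmholtz problems via the imaginary-part energy identity (using $\sigma\ge c_1>0$), the Fredholm alternative, and Schauder estimates, followed by a Banach fixed-point argument on the ball of radius $\delta$ with the same self-mapping bound $C(\delta^2+\eps)$ and the same quadratic-difference factorization for the contraction. The only cosmetic difference is that the paper invokes the Banach algebra property at the $\cC^{0,\alpha}$ level (where the right-hand sides naturally live) rather than $\cC^{2,\alpha}$, which changes nothing.
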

This result comes as a more-or-less straightforward application of the Banach fixed point theorem in a standard setting. For the convenience of the readers, we provide the proof in Appendix~\ref{APP:Well-posedness}.

The above well-posedness result is not satisfactory as it requires that the boundary data to be small. Currently, we do not have a stronger result. This result, however, is sufficient for the inverse problem we want to study as our method in the next sections will be mainly based on the linearization of the forward model with small boundary data.

In the engineering literature, it is often the case that one drops the $\gamma u^* v$ term in the first equation of system~\eqref{EQ:SHG}. In this case, the system is only one-way coupled. The solution to the first equation only appears in the second equation as the source term. In such a case, well-posedness of the system can be easily established for general boundary conditions. The corresponding inverse problems are also simplified. We will comment more on this issue in the next sections.

\subsection{First- and higher-order linearizations}
\label{SUBSEC:Linearization}

To deal with the challenge caused by the nonlinearity of the forward model~\eqref{EQ:SHG}, we use the technique of linearization~\cite{AsZh-JDE21,Choulli-arXiv2022,Isakov-ARMA93,Isakov-Book06,LaLiLiSa-JMPA21,LuZh-arXiv23,KrUh-arXiv19,KrUh-PAMS20,HaLi-NA23,Kian-Nonlinearity23}. We now document the linearization process.

For a given small number $\eps>0$, let $(u_\eps, v_\eps)$ be the solution to the system
\begin{equation}\label{EQ:SHG Eps}
	\begin{array}{rcll}
  	\Delta u_\eps+ k^2 (1+\eta) u_\eps +ik\sigma u_\eps&=& -k^2 \gamma u_\eps^* v_\eps,
  	&\mbox{in}\ \ \Omega\\
  	\Delta v_\eps+ (2k)^2 (1+\eta) v_\eps +i2k\sigma v_\eps &=& -(2k)^2 \gamma u_\eps^2,
  	&\mbox{in}\ \ \Omega
	\end{array}
\end{equation}
with boundary conditions 
\begin{equation}\label{EQ:BCs One Parameter Family}
	u_\eps = \eps g_1+\dfrac{1}{2}\eps^2 g_2, \qquad v_\eps = \eps h_1+\dfrac{1}{2}\eps^2 h_2,\qquad \mbox{on}\ \ \partial\Omega\,.
\end{equation}
We denote by $(u_0, v_0)=(0, 0)$ the solution for the case of $\eps=0$, and by $H_\eps$ be the data of the form~\eqref{EQ:Data Form} corresponding to $(u_\eps, v_\eps)$, that is
\begin{align*}
    H_\eps = \Gamma \sigma (|u_\eps|^2 + |v_\eps|^2)\,.
\end{align*}

We expect that the solution $(u_\eps, v_\eps)$ varies sufficiently smoothly with respect to $\eps$ when $\eps$ is adequately small. Therefore, formally we have expansions of the solution and the data in the form of:
\begin{equation}\label{EQ:Asymptotic Expansion}
\begin{aligned}
    u_\eps(\bx) &= \eps u^{(1)}(\bx) + \frac12 \eps^2 u^{(2)}(\bx) + o(\eps^2),\\
    v_\eps(\bx) &= \eps v^{(1)}(\bx) + \frac12 \eps^2 v^{(2)}(\bx) + o(\eps^2),\\
    H_\eps(\bx) &= \eps H^{(1)}(\bx) + \frac12 \eps^2 H^{(2)}(\bx) + \frac16 \eps^3 H^{(3)}(\bx) + o(\eps^3),
\end{aligned}
\end{equation}
as $\eps\rightarrow 0$. When this expansion is well-defined, we have that
\begin{align}\label{EQ:Derivatives}
    \begin{split}
    u^{(1)}(\bx) &:= \lim_{\eps\rightarrow 0} \frac{u_\eps(\bx)}{\eps},\qquad u^{(2)}(\bx) := \lim_{\eps\rightarrow 0} \frac{u_\eps(\bx) - \eps u^{(1)}(\bx)}{\frac12 \eps^2},\\
    v^{(1)}(\bx) &:= \lim_{\eps\rightarrow 0} \frac{v_\eps(\bx)}{\eps},\qquad v^{(2)}(\bx) := \lim_{\eps\rightarrow 0} \frac{v_\eps(\bx) - \eps v^{(1)}(\bx)}{\frac12 \eps^2},\\
    H^{(1)}(\bx) &:= \lim_{\eps\rightarrow 0} \frac{H_\eps(\bx)}{\eps},\qquad H^{(2)}(\bx) := \lim_{\eps\rightarrow 0} \frac{H_\eps(\bx) - \eps H^{(1)}(\bx)}{\frac12 \eps^2},\\
    H^{(3)}(\bx) &:= \lim_{\eps\rightarrow 0} \frac{H_\eps(\bx) - \eps H^{(1)}(\bx) - \frac12\eps^2 H^{(2)}(\bx)}{\frac16 \eps^3}.
    \end{split}
\end{align}




Assuming for the moment that all the derivatives are well-defined, straightforward formal calculations then show that on the first order, we have that $(u^{(1)}, v^{(1)})$ solves the boundary value problem:
\begin{equation}\label{EQ:Order eps}
	\begin{array}{rcll}
  	\Delta u^{(1)}+ k^2 (1+\eta) u^{(1)} +ik\sigma u^{(1)} &=& 0,
  	&\mbox{in}\ \ \Omega\\
  	\Delta v^{(1)}+ (2k)^2 (1+\eta) v^{(1)} +i2k\sigma v^{(1)} &=& 0,
  	&\mbox{in}\ \ \Omega\\
	u^{(1)} = g_1, \qquad v^{(1)} = h_1, & & & \mbox{on}\  \partial\Omega
	\end{array}
\end{equation}
while $H^{(1)}$ satisfies
\begin{equation}\label{EQ:Data Order eps}
	H^{(1)}=0\,.
\end{equation}



On the second order, we can formally verify that $(u^{(2)}, v^{(2)})$ solves the boundary value problem:
\begin{equation}\label{EQ:Order eps2}
	\begin{array}{rcll}
  	\Delta u^{(2)}+ k^2 (1+\eta) u^{(2)}+ik\sigma u^{(2)} &=& -2 k^2 \gamma u^{(1)*}v^{(1)},
  	&\mbox{in}\ \ \Omega\\
  	\Delta v^{(2)}+ (2k)^2 (1+\eta) v^{(2)}+i2k\sigma v^{(2)} &=& -2 (2k)^2\gamma (u^{(1)})^2,
  	&\mbox{in}\ \ \Omega\\
   u^{(2)} = g_2, && v^{(2)} = h_2, & \mbox{on}\  \partial\Omega\,.
	\end{array}
\end{equation}
The corresponding perturbative data $H^{(2)}$ can be expressed as
\begin{equation}\label{EQ:Data Order eps2}
	H^{(2)}=2\Gamma \sigma ( u^{(1)*} u^{(1)} + v^{(1)*} v^{(1)})\,.
\end{equation}

A little more algebra shows that the third-order data perturbation is in the form:
\begin{align}\label{EQ:Data Order eps3}
    H^{(3)} = 3\Gamma\sigma\left(u^{(1)*}u^{(2)}+u^{(1)}u^{(2)*}+v^{(1)*}v^{(2)}+v^{(1)}v^{(2)*}\right)\,.
\end{align}


The whole linearization process can be justified mathematically. We summarize the result here.
\begin{theorem}\label{THM:Linearization}
Let $\alpha\in (0,1)$ and $g_1,g_2\in \cC^{2,\alpha}(\partial\Omega;\bbC)$. For sufficiently small $\eps$, let $(u_\eps, v_\eps)$ denote the unique small solution in $\cC^{2,\alpha}(\overline{\Omega};\bbC)\times \cC^{2,\alpha}(\overline{\Omega};\bbC)$ to the system~\eqref{EQ:SHG Eps}. Then the derivatives~\eqref{EQ:Derivatives} are all well-defined. Moreover,~\eqref{EQ:Order eps},~\eqref{EQ:Data Order eps},~\eqref{EQ:Order eps2},~\eqref{EQ:Data Order eps2}, and~\eqref{EQ:Data Order eps3} hold.
\end{theorem}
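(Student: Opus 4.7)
The plan is to peel the first- and second-order parts off $(u_\eps, v_\eps)$ by subtracting the candidate Taylor terms and using the invertibility of the linearized operators to control the remainders in $\cC^{2,\alpha}$. Write $L_1 w := \Delta w + k^2(1+\eta)w + ik\sigma w$ and $L_2 w := \Delta w + (2k)^2(1+\eta)w + i2k\sigma w$. The first step is to observe that under \ref{ASS:Coefficients}, the Dirichlet problems for $L_1$ and $L_2$ are uniquely solvable: multiplying $L_j w = 0$ (with $w|_{\partial\Omega} = 0$) by $w^*$, integrating, and taking the imaginary part forces $\int_\Omega \sigma |w|^2 = 0$, so $w\equiv 0$ because $\sigma \ge c_1 > 0$. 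Standard Schauder theory then supplies a bounded solution map from $\cC^{0,\alpha}(\overline{\Omega})\times \cC^{2,\alpha}(\partial\Omega)$ to $\cC^{2,\alpha}(\overline{\Omega})$. In particular, \eqref{EQ:Order eps} and \eqref{EQ:Order eps2} have unique $\cC^{2,\alpha}$ solutions $(u^{(1)}, v^{(1)})$ and $(u^{(2)}, v^{(2)})$.

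Next, Theorem~\ref{THM:Well-Posedness} applied to the boundary data \eqref{EQ:BCs One Parameter Family} (whose $\cC^{2,\alpha}$ norm is $O(\eps)$) gives $\|u_\eps\|_{\cC^{2,\alpha}} + \|v_\eps\|_{\cC^{2,\alpha}} \le C\eps$ for all sufficiently small $\eps$. Set
\[
r^u_\eps := u_\eps - \eps u^{(1)}, \qquad r^v_\eps := v_\eps - \eps v^{(1)}.
\]
Subtracting $\eps$ times \eqref{EQ:Order eps} from \eqref{EQ:SHG Eps} shows $L_1 r^u_\eps = -k^2\gamma u_\eps^* v_\eps$ and $L_2 r^v_\eps = -(2k)^2\gamma u_\eps^2$, both with boundary data of size $O(\eps^2)$. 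The right-hand sides are $O(\eps^2)$ in $\cC^{0,\alpha}$ (by the bound on $(u_\eps, v_\eps)$ together with the algebra property of H\"older spaces), so Schauder estimates yield $\|r^u_\eps\|_{\cC^{2,\alpha}} + \|r^v_\eps\|_{\cC^{2,\alpha}} = O(\eps^2)$, which is exactly the well-definedness of $u^{(1)}, v^{(1)}$ in \eqref{EQ:Derivatives}.

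Now I would bootstrap. Put $s^u_\eps := u_\eps - \eps u^{(1)} - \tfrac12 \eps^2 u^{(2)}$ and $s^v_\eps := v_\eps - \eps v^{(1)} - \tfrac12 \eps^2 v^{(2)}$, both with zero Dirichlet data. Using \eqref{EQ:Order eps2}, a direct computation gives
\[
L_1 s^u_\eps = -k^2\gamma\bigl(u_\eps^* v_\eps - \eps^2 u^{(1)*}v^{(1)}\bigr), \qquad L_2 s^v_\eps = -(2k)^2\gamma\bigl(u_\eps^2 - \eps^2(u^{(1)})^2\bigr).
\]
Substituting $u_\eps = \eps u^{(1)} + r^u_\eps$ and $v_\eps = \eps v^{(1)} + r^v_\eps$ and invoking the first-order bound $\|r^{u,v}_\eps\|_{\cC^{2,\alpha}} = O(\eps^2)$, the right-hand sides collapse to $O(\eps^3)$ in $\cC^{0,\alpha}$; hence $\|s^{u,v}_\eps\|_{\cC^{2,\alpha}} = O(\eps^3)$, which verifies the second-order part of \eqref{EQ:Derivatives} and justifies \eqref{EQ:Order eps2}. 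Finally, plugging the expansion $u_\eps = \eps u^{(1)} + \tfrac12 \eps^2 u^{(2)} + O(\eps^3)$ (and the analog for $v_\eps$) into $H_\eps = \Gamma\sigma(u_\eps u_\eps^* + v_\eps v_\eps^*)$ and grouping by powers of $\eps$ produces \eqref{EQ:Data Order eps}, \eqref{EQ:Data Order eps2}, and \eqref{EQ:Data Order eps3}. I anticipate that the only delicate step is the bookkeeping of complex conjugates in the $\eps^3$ contribution, so that the symmetric combination $u^{(1)}u^{(2)*} + u^{(1)*}u^{(2)}$ (and its $v$-analog) emerges with exactly the constant needed to match \eqref{EQ:Data Order eps3}; the rest is routine once $L_1$ and $L_2$ are known to be invertible.
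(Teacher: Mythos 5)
Your proposal is correct and follows essentially the same strategy as the paper's proof in Appendix~\ref{APP:Derivation of the linearization}: establish invertibility of the linear operators via the energy/Schauder argument, use Theorem~\ref{THM:Well-Posedness} to get the $O(\eps)$ bound on $(u_\eps,v_\eps)$, and then bootstrap Schauder estimates on the remainders to obtain $O(\eps^2)$ and then $O(\eps^3)$ control before expanding $H_\eps$. The only cosmetic difference is that you peel off the first-order term and then the second-order term in two separate remainders, whereas the paper runs two rounds of estimates on the single second-order remainder $\mu_\eps, \nu_\eps$; the logical content is identical.
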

The proof of this differentiability result is provided in Appendix~\ref{APP:Derivation of the linearization}.

The multilinearization procedure we outlined here is quite standard. It has been improved by many authors and utilized to solve various inverse problems to nonlinear models; see, for instance,~\cite{FeLiLi-arXiv21,KrUh-arXiv19,LaLi-NA22,LaReZh-SIAM22,UhZh-JMPA21} and references therein for some examples of such results.

\section{The reconstruction of \texorpdfstring{($\Gamma$, $\sigma$, $\eta$)}{}}
\label{SEC:IP Step I}

The first inverse problem is therefore to reconstruct $(\Gamma,\sigma,\eta)$ from the data $H^{(2)}$ in~\eqref{EQ:Data Order eps2} with the model for $u^{(1)}$ and $v^{(1)}$ given in~\eqref{EQ:Order eps}. By taking $h_1=0$, the problem reduces to reconstructing $(\Gamma,\sigma,\eta)$ from the data 
\begin{equation}
	H^{(2)}=2\Gamma \sigma u^{(1)*} u^{(1)},
\end{equation}
with the model for $u^{(1)}$ given in \eqref{EQ:Order eps}.

When $\Gamma$ and $\eta$ are known, this problem was analyzed in~\cite{AmGaJiNg-ARMA12,BaReUhZh-IP11}. It was shown that $\sigma$ can be uniquely recovered with a fixed point iteration. More precisely, for the model
\begin{equation}\label{EQ:Helmholtz Scalar}
	\begin{array}{rcll}
		\Delta u + k^2(1+\eta(\bx)) u + ik\sigma(\bx) u &= &0, & \mbox{in}\ \Omega\\
		u & = & g, & \mbox{on}\ \partial\Omega
	\end{array}
\end{equation}
with internal data
\begin{equation}\label{EQ:Helmholtz Data}
	H(\bx)=\Gamma \sigma(\bx)|u|^2\,,
\end{equation}
it is shown in~\cite{BaReUhZh-IP11} that when $\Gamma$ and $\eta$ are known, one can reconstruct $\sigma$ uniquely and stably (in appropriate metrics) from one dataset, provided that the boundary illumination $g$ is appropriately chosen. (More specifically, the proof requires that $g$ is sufficiently close to a a function of the form $e^{\rho\cdot x}|_{\partial\Omega}$, for some $\rho\in\bbC^n$ with $\rho\cdot\rho=0$ and $|\rho|$ sufficiently large.)

In~\cite{AmGaJiNg-ARMA12}, an explicit procedure for reconstructing $\sigma$ is given (again, assuming that $\eta$ and $\Gamma$ are known). Here, we modify the method in order to deal with the case of unknown refractive index $\eta$ and unknown Gr{\"u}neisen coefficient $\Gamma$. We use the procedure to develop a uniqueness and stability result from two well-chosen datasets. 

Let $g_1$ and $g_2$ be two incident sources. We measure data corresponding to the illuminations $g_1+g_2$ and $g_1+ig_2$ in addition to those corresponding to $g_1$ and $g_2$. The linearity of~\eqref{EQ:Helmholtz Scalar} means that solutions corresponding to $g_1+g_2$ and $g_1+ig_2$ are $u_1+u_2$ and $u_1+iu_2$ respectively. The corresponding data are $\Gamma\sigma |u_1+u_2|^2$ and $\Gamma\sigma |u_1+iu_2|^2$ respectively. 
We may now apply the \emph{polarization identity} to get:
\begin{align*}
    u_1 u_2^* = \frac12\left(|u_1+u_2|^2+i|u_1+iu_2|^2-(1+i)|u_1|^2-(1+i)|u_2|^2\right)
\end{align*}
on the inner product space $\bbC$. This gives us that the quantity
\begin{align*}
    \Gamma\sigma u_1 u_2^* =  \frac12\left(\Gamma\sigma|u_1+u_2|^2+i\Gamma\sigma|u_1+iu_2|^2-(1+i)\Gamma\sigma|u_1|^2-(1+i)\Gamma\sigma|u_2|^2\right)
\end{align*}
is known.

Henceforth, given illuminations $\{g_j\}_{j=1}^{2}$, we can reconstruct from the measured internal data the new data:
\begin{equation}\label{EQ:Data Polarized}
    E_j = \Gamma\sigma u_j u_1^*,
\end{equation}
for $j=1,2$.

The above construction can be used to develop a uniqueness result straightforwardly.
\begin{theorem}\label{THM:Uniqueness}
Let $\{g_j\}_{j=1}^{2}$ be a set of incident source functions, and suppose that the measured data $\{E_j\}_{j=1}^{2}$ satisfy the following two conditions:
\begin{enumerate}[label=($\cB$-\roman*)]
    \item $E_1(\bx)\ge \alpha_0>0$ for some $\alpha_0$, a.e. $\bx\in \Omega$.
    \item The vector field
    \[
        \beta(\bx):=\nabla \frac{E_2}{E_1}\,
    \]
    is at least $W^{1,\infty}$, and $|\beta|\ge \beta_0>0$ for some $\beta_0$, a.e. $\bx\in\Omega$.
\end{enumerate}
Then $\Gamma$, $\eta$, and $\sigma$ are uniquely determined from the data $\{E_j\}_{j=1}^{2}$.
\end{theorem}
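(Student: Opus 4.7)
The plan is to prove uniqueness via an explicit reconstruction. First, by the polarization identity derived just above the theorem, the measured data determine both $E_1=\Gamma\sigma|u_1|^2$ and $E_2=\Gamma\sigma u_2u_1^*$, and consequently---under hypothesis~(i)---the complex-valued ratio $q:=E_2/E_1=u_2/u_1$. Since $u_1$ and $u_2=qu_1$ both solve \eqref{EQ:Helmholtz Scalar} with the same coefficients, plugging $u_2=qu_1$ into its equation and using the equation for $u_1$ cancels all terms not involving derivatives of $q$, yielding the conservation law
\begin{equation*}
	\nabla\cdot(u_1^2\nabla q)=0 \quad\Longleftrightarrow\quad \nabla q\cdot\nabla\ln u_1=-\tfrac{1}{2}\Delta q \quad\text{in}\ \Omega.
\end{equation*}
Together with the Dirichlet data $u_1|_{\partial\Omega}=g_1$, this is an overdetermined first-order complex PDE for $\ln u_1$.

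Once $u_1$ is reconstructed from this problem, the Helmholtz equation gives the pointwise identity $k^2(1+\eta)+ik\sigma=-\Delta u_1/u_1$, so $\eta$ and $\sigma$ are simply the real and imaginary parts of the right-hand side, and then $\Gamma=E_1/(\sigma|u_1|^2)$. The reconstruction therefore hinges on the uniqueness of $u_1$ in the transport equation above.

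To isolate this uniqueness statement, I would argue by contradiction. Suppose $(\Gamma,\eta,\sigma)$ and $(\tilde\Gamma,\tilde\eta,\tilde\sigma)$ produce the same data, with solutions $u_j,\tilde u_j$ to \eqref{EQ:Helmholtz Scalar} for $j=1,2$. Since $q$ is the same for both triples, the ratio $w:=u_j/\tilde u_j$ is independent of $j$ and satisfies $w|_{\partial\Omega}=1$. Substituting $u_j=w\tilde u_j$ and dividing by $\tilde u_j$ gives
\begin{equation*}
	\Delta w+2\nabla w\cdot\nabla\ln\tilde u_j+wV=0,\quad j=1,2,\qquad V:=k^2(\eta-\tilde\eta)+ik(\sigma-\tilde\sigma).
\end{equation*}
Subtracting the $j=1,2$ equations eliminates both $\Delta w$ and $wV$ and produces the reduced transport equation $\nabla w\cdot\nabla q=0$ in $\Omega$. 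Thus the desired uniqueness reduces to the claim: $\nabla w\cdot\nabla q=0$ in $\Omega$ together with $w|_{\partial\Omega}=1$ and $|\nabla q|\ge\beta_0>0$ imply $w\equiv 1$; then $u_j=\tilde u_j$ forces $\eta=\tilde\eta$, $\sigma=\tilde\sigma$, and $\Gamma=\tilde\Gamma$ via $E_1=\Gamma\sigma|u_1|^2$.

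The hard part will be justifying this uniqueness claim for the complex first-order transport equation. Unlike real transport equations, where the characteristic flow forces the conclusion directly, here $\nabla q$ is complex; decomposing $\nabla q=a+ib$ into real and imaginary parts yields a $2\times 2$ real first-order system for $(\Re w,\Im w)$ whose principal symbol has determinant $(a\cdot\xi)^2+(b\cdot\xi)^2$. Hypothesis~(ii), $|a|^2+|b|^2\ge\beta_0^2$, is what keeps this symbol nondegenerate. In two dimensions the equation can be recast via Wirtinger derivatives as a Beltrami-type equation, giving a clean elliptic unique-continuation statement; in higher dimensions a more delicate characteristic-integration or propagation-of-smallness argument along paths connecting to $\partial\Omega$ is the technical heart of the proof.
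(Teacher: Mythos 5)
Your reduction is sound and, up to the final step, lands on exactly the same crux as the paper: the polarization step, the identity $E_2/E_1=u_2/u_1$, the conservation law $\nabla\cdot(u_1^2\nabla\frac{E_2}{E_1})=0$, and the final recovery of $(\eta,\sigma)$ from $-\Delta u_1/u_1$ and of $\Gamma$ from $E_1/(\sigma|u_1|^2)$ all mirror the paper's argument. Your quotient reformulation is legitimate: the claim that $\beta\cdot\nabla w=0$ in $\Omega$ together with $w|_{\partial\Omega}=1$ forces $w\equiv 1$ is essentially the homogeneous uniqueness statement for the paper's transport problem $\nabla\cdot(u_1^2\beta)=0$ in $\Omega$, $u_1=g_1$ on $\partial\Omega$.

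The genuine gap is that you never prove this claim, and the mechanism you propose for closing it fails under hypothesis ($\cB$-ii). Writing $\beta=a+ib$, the $2\times 2$ real system for $(\Re w,\Im w)$ has symbol determinant $(a\cdot\xi)^2+(b\cdot\xi)^2$, which vanishes for every $\xi$ orthogonal to $\mathrm{span}\{a,b\}$; such nonzero $\xi$ always exist when $d\ge 3$, and exist in $d=2$ wherever $\Re\beta$ and $\Im\beta$ are parallel. Hypothesis ($\cB$-ii) only supplies $|a|^2+|b|^2\ge\beta_0^2$ and rules out neither degeneracy, so the system is not elliptic in general and the Beltrami/unique-continuation route is unavailable. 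A pointwise lower bound on $|\beta|$ alone is also insufficient for a naive characteristic argument: a nonvanishing real vector field can have closed integral curves that never reach $\partial\Omega$, in which case the boundary condition $w|_{\partial\Omega}=1$ constrains nothing on those curves. The paper closes exactly this step by keeping the equation in divergence form, $\nabla\cdot(u_1^2\beta)=0$, and invoking the DiPerna--Lions/Ambrosio renormalization theory for transport equations with $W^{1,\infty}$ vector fields (which is precisely why ($\cB$-ii) demands $\beta\in W^{1,\infty}$ rather than mere boundedness). To complete your argument you would need either to import that theory for your homogeneous equation or to supply the propagation-of-constancy argument you allude to; as written, the proposal reduces the theorem to an unproven claim and therefore does not establish uniqueness.
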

\begin{proof} 
We follow the procedures developed in~\cite{AmGaJiNg-ARMA12,BaRe-IP11}. We multiply the equation for $u_1$ by $u_2$ and multiply the equation for $u_2$ by $u_1$. We subtract the results to have
\[
    u_1\Delta u_2-u_2\Delta u_1=0\,.
\]
We can then rewrite this into 
\[
    \nabla \cdot u_1^2 \nabla \frac{u_2}{u_1} = \nabla \cdot u_1^2 \nabla \frac{E_2}{E_1} =\nabla\cdot (u_1^2 \beta)=0\,.
\]
The vector field $\beta$ is known from the data. Therefore the above equation is a transport equation for $u_1$, that is,
\begin{equation}\label{EQ:Transport}
    \nabla\cdot (u_1^2 \beta)=0,\ \ \mbox{in}\ \  \Omega, \qquad u_1=g_1,\ \ \mbox{on}\ \ \partial\Omega\,.
\end{equation}
With the assumption in ($\cB$-$ii$), classical results in~\cite{Ambrosio-IM04,BoCr-SIAM06,CoLe-DMJ02,DiLi-AM89} show that there exists a unique weak solution $u_1$ to~\eqref{EQ:Transport}. This gives us the unique reconstruction of $u_1$. 

Now that we have reconstructed $u_1$, we can use the equation~\eqref{EQ:Helmholtz Scalar} to reconstruct the potential $q$:
\begin{equation}\label{EQ:Potential}
    q(\bx):=k^2(1+\eta) + ik\sigma = -\frac{\Delta u_1}{u_1}\,.
\end{equation}
This gives us $\eta$ and $\sigma$ (which are obtained by taking real and imaginary parts of $q$). The last step is to reconstruct $\Gamma$ as 
\begin{equation}\label{EQ:Gamma}
    \Gamma=\frac{H_1}{\sigma|u_1|^2}\,.
\end{equation}
The proof is complete.
\end{proof}
This uniqueness result shows a dramatic difference between the inverse problem defined by ~\eqref{EQ:Helmholtz Scalar} and~\eqref{EQ:Helmholtz Data} and a similar inverse problem in quantitative photoacoustic tomography in~\cite{BaRe-IP11} where it is show that the multiplicative coefficient $\Gamma$ causes non-uniqueness in the reconstructions, independent of the amount of data available.

The proof of the above uniqueness result is constructive in the sense that it provides an explicit way to solve the inverse problem: solving~\eqref{EQ:Transport} for $u_1$, computing $q$ using~\eqref{EQ:Potential} and then computing $\Gamma$ as in~\eqref{EQ:Gamma}. 

In fact, the above explicit reconstruction procedure also leads to partial (weighted) stability results for the inverse problem.
\begin{theorem}
Let $E=(E_1,E_2)$ and $\wt E=(\wt E_1, \wt E_2)$ be the data corresponding to the coefficients $(\Gamma, \eta, \sigma)\in\cY^3$ and $(\wt\Gamma, \wt \eta, \wt \sigma)\in\cY^3$ respectively, generated from illumination source pair $(g_1, g_2)$. Under the assumption that $E$ and $\wt E$ satisfy ($\cB$-i)-($\cB$-ii), we assume further that $g_1$ and $g_2$ are selected such that $\frac{E_2}{E_1}$ is sufficiently small. Then we have that, for some constants $c>0$,
\begin{equation}\label{EQ:Stab}
   \|\Gamma\sigma-\wt\Gamma\wt\sigma\|_{L^2(\Omega)} \le c\Big(\|H_1-\wt H_1\|_{L^2(\Omega)}+ \|E-\wt E\|_{(\cH^2(\Omega))^2} \Big)\,.
\end{equation}
\end{theorem}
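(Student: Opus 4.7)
My plan is to follow the constructive procedure in the proof of Theorem~\ref{THM:Uniqueness} and quantify each step. Since $\Gamma\sigma = H_1/|u_1|^2$ by~\eqref{EQ:Gamma}, and the analogous formula holds for the tilded quantities, establishing~\eqref{EQ:Stab} reduces to an $L^2$ estimate of $|u_1|^2 - |\wt u_1|^2$. The quantity I actually know how to control is $w := u_1^2 - \wt u_1^2$, because both $u_1^2$ and $\wt u_1^2$ are governed by the transport equation~\eqref{EQ:Transport}; the reverse triangle inequality $\big||u_1|^2 - |\wt u_1|^2\big| \le |u_1^2 - \wt u_1^2|$ then lets me transfer the estimate to the moduli. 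So the problem becomes a stability analysis for the transport equation~\eqref{EQ:Transport} under perturbations of the coefficient $\beta = \nabla(E_2/E_1)$.

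The first (and easiest) step is to bound the coefficient perturbation: using assumption ($\cB$-$i$) to keep $E_1,\wt E_1\ge\alpha_0>0$, standard quotient-rule manipulations in Sobolev spaces give $\|\beta - \wt\beta\|_{H^1(\Omega)} \le C\|E - \wt E\|_{(\cH^2(\Omega))^2}$. The main step is then the transport-equation comparison: subtracting the two equations yields
\begin{equation*}
\nabla\cdot(w\beta) = -\nabla\cdot\bigl(\wt u_1^2(\beta - \wt\beta)\bigr)\quad\mbox{in}\ \Omega,\qquad w=0\ \mbox{on}\ \partial\Omega.
\end{equation*}
Under ($\cB$-$ii$), $\beta\in W^{1,\infty}$ with $|\beta|\ge\beta_0>0$, so it admits a unique regular Lagrangian flow foliating $\overline\Omega$ from the boundary (Ambrosio--DiPerna--Lions theory). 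Integrating the resulting first-order ODE along characteristics produces an explicit representation of $w$ in terms of the source, and absorbing the uniform $\cC^{2,\alpha}(\overline\Omega)$ bound on $\wt u_1$ granted by Theorem~\ref{THM:Well-Posedness} yields
\begin{equation*}
\|w\|_{L^2(\Omega)} \le C\|\beta - \wt\beta\|_{H^1(\Omega)}.
\end{equation*}
I expect this to be the hardest step: uniform control of the flow time and of the exponential weights generated by $\nabla\cdot\beta$ is required, and this is precisely where the smallness hypothesis on $E_2/E_1$ will enter, by forcing $\beta$ to stay close to a controlled reference field, preventing caustics, and keeping all constants uniform across the perturbation.

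To close the argument, I would use the pointwise identity
\begin{equation*}
\Gamma\sigma - \wt\Gamma\wt\sigma \;=\; \frac{H_1 - \wt H_1}{|u_1|^2} \;+\; \wt H_1\,\frac{|\wt u_1|^2 - |u_1|^2}{|u_1|^2\,|\wt u_1|^2},
\end{equation*}
estimating the first term in $L^2$ directly and the second via the reverse triangle inequality together with the $L^2$ bound on $w$. Both terms require a uniform pointwise lower bound on $|u_1|$ and $|\wt u_1|$; this is again where the small-$E_2/E_1$ regime is convenient, since there $u_1$ can be kept close to an explicit reference solution (e.g.\ a CGO-type solution in the spirit of~\cite{BaReUhZh-IP11}) with modulus bounded away from zero. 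Combining the three estimates then yields~\eqref{EQ:Stab}.
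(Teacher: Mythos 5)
Your overall architecture coincides with the paper's: the same pointwise splitting of $\Gamma\sigma-\wt\Gamma\wt\sigma$ into a data term and a term controlled by $u_1^2-\wt u_1^2$, the same reduction to the perturbed transport equation $\nabla\cdot\big((\xi-\wt\xi)\beta\big)=-\nabla\cdot\big(\wt\xi(\beta-\wt\beta)\big)$ with $\xi=u_1^2$, and the same quotient-rule bounds $\|\beta-\wt\beta\|_{L^2(\Omega)}\lesssim\|E-\wt E\|_{(\cH^1(\Omega))^2}$ at the end. Where you genuinely diverge is the central stability estimate for that transport equation. The paper stays entirely within an integration-by-parts framework: it multiplies the difference equation by $(\xi-\wt\xi)^*$, tests against $\phi=E_2^*/E_1^*$ so that $\nabla\phi=\beta^*$ produces the coercive term $\int_\Omega|\xi-\wt\xi|^2|\beta|^2\,d\bx$, and closes with Young's inequality; the smallness of $E_2/E_1$ is used precisely to absorb the resulting cross terms into the left-hand side. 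You instead propose a Lagrangian representation via the Ambrosio--DiPerna--Lions flow of $\beta$. That route can plausibly be made to work, but note that your stated mechanism for the smallness hypothesis is not the right one: for the gradient field $\beta=\nabla(E_2/E_1)$ there are no caustics to prevent --- what you actually need is that $E_2/E_1$ increases at rate $|\beta|^2\ge\beta_0^2$ along trajectories (so the exit time is bounded by $\mathrm{osc}(E_2/E_1)/\beta_0^2$) and that the $W^{1,\infty}$ bound on $\beta$ controls the Jacobian of the flow, which is what converts an along-characteristics formula into an $L^2(\Omega)$ bound. These are exactly the points you label ``the hardest step'' and leave unproved, so at that juncture your write-up is a plan rather than a proof, whereas the paper's weighted energy identity closes in a few lines with no flow machinery. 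Two smaller remarks: your reverse-triangle-inequality step $\big||u_1|^2-|\wt u_1|^2\big|\le|u_1^2-\wt u_1^2|$ is correct and is implicitly what the paper uses in passing from \eqref{EQ:Gamma-Sigma} to the estimate on $\xi-\wt\xi$; and the uniform lower bound on $|u_1|$ does not require CGO solutions --- it follows directly from ($\cB$-$i$) together with the upper bound on $\Gamma\sigma$ in assumption \ref{ASS:Coefficients}, since $|u_1|^2=E_1/(\Gamma\sigma)\ge\alpha_0/c_2^2$.
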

\begin{proof}
We first observe that
\[
   \Gamma \sigma-\wt\Gamma\wt\sigma=\frac{H_1}{|u_1|^2}-\frac{\wt H_1}{|\wt u_1|^2} =\frac{H_1-\wt H_1}{|u_1|^2}+\frac{\wt H_1}{|\wt u_1|^2} \frac{|\wt u_1|^2-|u_1|^2}{|u_1|^2}\,.
\]
This, together with the Triangle Inequality and the fact that $|u_k|^2$ is bounded from below, gives us
\begin{equation}\label{EQ:Gamma-Sigma}
   \|\Gamma \sigma-\wt\Gamma\wt\sigma\|_{L^2(\Omega)}\le \wt c \Big(\|H_1-\wt H_1\|_{L^2(\Omega)} + \|\wt u_1^2-u_1^2\|_{L^2(\Omega)}\Big)\,,
\end{equation}
for some $\wt c>0$.

To bound the second term in~\eqref{EQ:Gamma-Sigma} by the data, let $\xi=u_1^2$ and $\wt \xi=\wt u_1^2$. Then we have from the equations $\nabla\cdot\xi\beta=0$ and $\nabla\cdot\wt\xi\wt\beta=0$ that
\begin{equation}\label{EQ:Transport Difference}
    \nabla \cdot \Big((\xi-\wt\xi) \beta\Big) + \nabla\cdot \Big(\wt \xi (\beta-\wt\beta)\Big) =0\,.
\end{equation}
This can be further rewritten into
\begin{equation*}
	\beta\cdot\nabla(\xi-\wt\xi)=-(\xi-\wt\xi)\nabla\cdot\beta - \nabla\cdot \Big(\wt \xi (\beta-\wt\beta)\Big)\,,
\end{equation*}
which immediately leads to the bound
\begin{equation}\label{EQ:ubound-1}
	\|\beta\cdot\nabla(\xi-\wt\xi)\|_{L^2(\Omega)} \le \|(\xi-\wt\xi)\nabla\cdot\beta\|_{L^2(\Omega)}^2+\|\nabla\cdot \Big(\xi (\beta-\wt\beta)\Big)\|_{L^2(\Omega)}\,.
\end{equation}
With the same algebra, we can derive the bound
\begin{equation}\label{EQ:ubound-2}
	\|\wt \beta\cdot\nabla(\xi-\wt\xi)\|_{L^2(\Omega)}\le \|(\xi-\wt\xi)\nabla\cdot\wt \beta\|_{L^2(\Omega)}+\|\nabla\cdot \Big(\wt \xi (\beta-\wt\beta)\Big)\|_{L^2(\Omega)}\,.
\end{equation}

We now multiply~\eqref{EQ:Transport Difference} by $(\xi-\wt\xi)^*$ to have the equation, after a little algebra,
\begin{displaymath} 
\nabla\cdot \Big(\big|\xi-\wt\xi\big|^2 \beta\Big) - (\xi-\wt\xi)\beta\cdot \nabla (\xi-\wt\xi)^* + (\xi-\wt\xi)^*\nabla\cdot  \wt\xi (\beta-\wt\beta) =0\,.
\end{displaymath}
Integrating this equation against a test function $\phi\in \cH^1(\Omega)$ and using integration-by-parts on the last term lead us to the identity
\begin{multline*} 
\int_\Omega \big|\xi-\wt\xi\big|^2 \beta\cdot\nabla \phi d\bx + \int_\Omega (\xi-\wt\xi)\beta \cdot \phi\nabla (\xi-\wt\xi)^* d\bx \\ 
+\int_\Omega (\xi-\wt\xi)^* \wt\xi (\beta-\wt\beta) \cdot \nabla \phi d\bx +\int_\Omega \phi \wt\xi (\beta-\wt\beta)\cdot\nabla (\xi-\wt\xi)^* d\bx =0\,.
\end{multline*}
To simplify the presentation, we combine the second and the fourth terms in the equation to have
\begin{multline*} 
\int_\Omega \big|\xi-\wt\xi\big|^2 \beta\cdot\nabla \phi d\bx + \int_\Omega (\beta \xi-\wt\beta\wt\xi)\cdot \phi\nabla (\xi-\wt\xi)^* d\bx +\int_\Omega (\xi-\wt\xi)^* \wt\xi (\beta-\wt\beta) \cdot \nabla \phi d\bx  =0\,.
\end{multline*}
Taking the test function $\phi=\frac{E_2^*}{E_1^*}$ (hence $\nabla \phi=\nabla\frac{E_2^*}{E_1^*}=\beta^*$), we have
\begin{equation*} 
\int_\Omega \big|\xi-\wt\xi\big|^2 |\beta|^2 d\bx = - \int_\Omega (\beta \xi-\wt\beta\wt\xi)\cdot \frac{E_2^*}{E_1^*}\nabla (\xi-\wt\xi)^* d\bx -\int_\Omega (\xi-\wt\xi)^*\beta^* \cdot \wt\xi (\beta-\wt\beta)  d\bx  \,.
\end{equation*}
This gives us the bound
\begin{equation}\label{EQ:ubound-3}
\|(\xi-\wt\xi)\beta\|_{L^2(\Omega)}^2\le  \int_\Omega \big|(\beta \xi-\wt\beta\wt\xi)\cdot \frac{E_2^*}{E_1^*}\nabla (\xi-\wt\xi)^*\big| d\bx + \int_\Omega \big|(\xi-\wt\xi)^*\beta^* \cdot \wt\xi (\beta-\wt\beta)\big|  d\bx  \,.
\end{equation}

The first term on the right-hand side of~\eqref{EQ:ubound-3} can be bounded as follows:
\begin{multline*}
 \int_\Omega \big|(\beta \xi-\wt\beta\wt\xi)\cdot \frac{E_2^*}{E_1^*}\nabla (\xi-\wt\xi)^*\big| d\bx 
 \le \|\frac{E_2^*}{E_1^*}\|_{L^\infty(\Omega)} \int_\Omega \big|(\beta \xi-\wt\beta\wt\xi)\cdot \nabla (\xi-\wt\xi)^*\big| d\bx \\ 
 \le \frac{1}{2}\|\frac{E_2^*}{E_1^*}\|_{L^\infty(\Omega)}\Big[ \int_\Omega \big|\beta \xi\cdot \nabla (\xi-\wt\xi)^*\big|^2 d\bx +\int_\Omega \big|\wt\beta \wt \xi\cdot \nabla (\xi-\wt\xi)^*\big|^2 d\bx\Big] \\ 
\le 2\|\frac{E_2^*}{E_1^*}\|_{L^\infty(\Omega)}\Big[\|(\xi-\wt\xi)\nabla\cdot\beta\|_{L^2(\Omega)}^2+\|\nabla\cdot \Big(\xi (\beta-\wt\beta)\Big)\|_{L^2(\Omega)}^2\Big]\,,
\end{multline*}
where we have used~\eqref{EQ:ubound-1} and~\eqref{EQ:ubound-2} to get the last inequality. The second term on the right-hand side of~\eqref{EQ:ubound-3} can be bounded as:
\[
     \int_\Omega \big|(\xi-\wt\xi)^*\beta^* \cdot \wt\xi (\beta-\wt\beta)\big|  d\bx \le \frac{1}{2}\Big[\frac{1}{\kappa^2}\|(\xi-\wt\xi)\beta\|_{L^2(\Omega)}^2 +\kappa^2 \|(\beta-\wt\beta)\wt\xi\|_{L^2(\Omega)}^2\Big]\,,
\]
for any $\kappa>0$.
 
Under the assumption that $|\frac{E_2}{E_1}|$ is sufficiently small, we can take $\kappa$ to be sufficiently large so that ~\eqref{EQ:ubound-3} now implies that
\begin{equation}\label{EQ:ubound-4}
	\|\xi-\wt\xi \|_{L^2(\Omega)}^2\lesssim  \|(\beta-\wt\beta)\wt\xi\|_{L^2(\Omega)}^2 +\|\nabla\cdot \Big(\xi (\beta-\wt\beta)\Big)\|_{L^2(\Omega)}^2\,.
\end{equation}

The next step is to bound $\|\beta-\wt\beta\|_{L^2(\Omega)}$ and $\|\nabla\cdot \xi \big(\beta-\wt\beta\big)\|_{L^2(\Omega)}$. To this end, we use the expansion
\begin{multline*}
\beta-\wt\beta=(\wt E_1-E_1)\nabla \frac{\wt E_2}{E_1\wt E_1}+\frac{\wt E_2}{E_1\wt E_1}\nabla(\wt E_1-E_1)\\
+(E_2-\wt E_2)\nabla\frac{1}{E_1}+\frac{1}{E_1^2}\nabla(E_2-\wt E_2)+\frac{1}{E_1\wt E_1}(\wt E_1-E_1)\nabla(E_2-\wt E_2)\,,
\end{multline*}
to derive the bound
\[
    \|\beta-\wt \beta\|_{L^2(\Omega)}\lesssim \|E-\wt E\|_{(\cH^1(\Omega))^2}\,.
\]
In a similar manner, we find that 
\[
    \|\nabla\cdot \xi \big(\beta-\wt \beta\big)\|_{L^2(\Omega)}\lesssim \|E-\wt E\|_{(\cH^2(\Omega))^2}\,.
\]
Plugging these results into~\eqref{EQ:ubound-4} will give us
\[
    \|\xi-\wt\xi\|_{L^2(\Omega)} \lesssim \|E-\wt E\|_{(\cH^2(\Omega))^2}\,.
\]
This, together with the bound~\eqref{EQ:Gamma-Sigma}, will lead us to the stability results of ~\eqref{EQ:Stab}.
\end{proof}

\begin{remark} 
 With the standard techniques complex geometrical solutions, one can show that for every value of the true coefficients $\eta, \sigma \in \cH^m(\Omega)$, where $m>1+\frac{d}{2}$, there exists a set of illuminations $(g_j)_{j=1}^{d+1}$ such that the corresponding measured data $(E_j)_{j=1}^{d+1}$ satisfies both conditions ($\cB$-$i$) and ($\cB$-$ii$)~\cite{AmGaJiNg-ARMA12}. 
 In fact, following \cite{Alberti-arXiv22}, it may be possible to ensure that ($\cB$-$i$) and ($\cB$-$ii$) hold with high probability by drawing the boundary illuminations $(g_j)_{j=1}^{d+1}$ independently at random from a sub-Gaussian distribution on $\cH^{1/2}(\partial\Omega)$. 
\end{remark}

\begin{remark} 
We observe that the above reconstruction procedure also works in the case when the internal datum is of the form $H=|u|$ (in which case $|u|^2$ is known), that is, the datum is independent of $\Gamma\sigma$.
\end{remark}

\section{The reconstruction of \texorpdfstring{$\gamma$}{}}
\label{SEC:IP Step II}

The remaining problem is to reconstruct $\gamma$ using third-order perturbation of the data. In the rest of this section, we assume that in addition to the internal data~\eqref{EQ:Data}, we also have access to the Dirichlet-to-Neumann map
\begin{equation}\label{EQ:DtN}
    \Pi_{\gamma}: (g, h) \mapsto \left(\pdr{u}{\nu}\Big|_{\partial\Omega}, \pdr{v}{\nu}\Big|_{\partial\Omega}\right)\equiv (J_u,J_v)\,.
\end{equation}
Note that we omit the dependence of $\Pi$ on $\Gamma$, $\eta$, and $\sigma$ intentionally here since those coefficients are already known. 

The multilinearization of $(J_u, J_v)$ can be established with the calculations in Appendix~\eqref{APP:Derivation of the linearization}. We will directly use the derivatives $(J_u^{(1)}, J_v^{(1)})$  and $(J_u^{(2)}, J_v^{(2)})$.

Let us recall that the third-order derivative of the data $H^{(3)}$ is given in~\eqref{EQ:Data Order eps3}. This implies that
\begin{align*}    u^{(1)*}u^{(2)}+u^{(1)}u^{(2)*}+v^{(1)*}v^{(2)}+v^{(1)}v^{(2)*} = \frac{H^{(3)}}{3\Gamma\sigma},
\end{align*}
where $(u^{(1)}, v^{(1)})$ and $(u^{(2)}, v^{(2)})$ are respectively the solutions to~\eqref{EQ:Order eps} and~\eqref{EQ:Order eps2}, is known in $\Omega$. 

From now on, we set $g_2=h_2=0$ in \eqref{EQ:BCs One Parameter Family}. Consequently, the system \eqref{EQ:Order eps2} for $(u^{(2)}, v^{(2)})$ reduces to 
\begin{equation}\label{EQ:Order eps2 ZeroBC}
	\begin{array}{rcll}
  	\Delta u^{(2)}+ k^2 (1+\eta) u^{(2)}+ik\sigma u^{(2)} &=& -2 k^2 \gamma u^{(1)*}v^{(1)},
  	&\mbox{in}\ \ \Omega\\
  	\Delta v^{(2)}+ (2k)^2 (1+\eta) v^{(2)}+i2k\sigma v^{(2)} &=& -2 (2k)^2\gamma (u^{(1)})^2,
  	&\mbox{in}\ \ \Omega\\
	u^{(2)} = 0, && v^{(2)} = 0,& \mbox{on}\ \ \partial\Omega
    \end{array}
\end{equation}
We can now take the complex conjugate of~\eqref{EQ:Order eps2 ZeroBC} and leverage the fact that $\gamma$ is real-valued to write down the following system of \emph{linear} equations for $(u^{(2)}, v^{(2)})$, $(u^{(2)*}, v^{(2)*})$, and $\gamma$:
\begin{equation}\label{EQ:Lin Sys}
\begin{array}{rcll}
(\Delta + q_1)u^{(2)} + 2k^2 u^{(1)*}v^{(1)}\gamma &=&0,&\mbox{in}\ \Omega\\
    (\Delta + q_1^*)u^{(2)*} + 2k^2 u^{(1)}v^{(1)*}\gamma &=& 0,&\mbox{in}\ \Omega\\
    (\Delta + q_2)v^{(2)} + 2(2k)^2(u^{(1)})^2\gamma &=& 0, &\mbox{in}\ \Omega\\
    (\Delta + q_2^*)v^{(2)*} + 2(2k)^2(u^{(1)*})^2\gamma &=& 0, &\mbox{in}\ \Omega\\
u^{(1)*}u^{(2)}+u^{(1)}u^{(2)*}+v^{(1)*}v^{(2)}+v^{(1)}v^{(2)*} &=& \frac{H^{(3)}}{3\Gamma\sigma}, &\mbox{in}\ \Omega\\
    (u^{(2)},u^{(2)*},v^{(2)},v^{(2)*}) &= &(0,0,0,0),&\mbox{on}\ \partial\Omega\\
    \left(\pdr{u^{(2)}}{\nu},\pdr{u^{(2)*}}{\nu},\pdr{v^{(2)}}{\nu},\pdr{v^{(2)*}}{\nu}\right) &=& (J_u^{(2)},J_u^{(2)*},J_v^{(2)},J_v^{(2)*}),&\mbox{on}\ \partial\Omega\,,
\end{array}    
\end{equation}
where we have used the notation
\begin{align*}
q_1 := k^2(1+\eta) + ik\sigma,\qquad q_2 := (2k)^2(1+\eta) + i2k\sigma\,.
\end{align*}

If we can solve~\eqref{EQ:Lin Sys}, we can reconstruct $\gamma$ (and the associated $(u^{(2)}, v^{(2)})$). This is a non-iterative reconstruction method. In the rest of this section, we show that $\gamma$ can be uniquely reconstructed from available data by analyzing the uniqueness of the solution to the linear system~\eqref{EQ:Lin Sys}. The analysis is based on the uniqueness theory for redundant elliptic systems reviewed in~\cite{Bal-CM13}, which we summarize briefly in Appendix~\ref{APP:Elliptic system theory} for the convenience of the readers.

\begin{theorem}\label{THM:Reconstruction of gamma} Let $X\subset \bbR^n$ be an open set and $\Gamma$, $\eta$, $\sigma$ be given positive $\cC^2$ functions on $\overline{X}$. For every bounded open subset $\Omega\subset X$ with smooth boundary $\partial\Omega$, we denote by $(\Lambda_\gamma^{(\Omega)}, \Pi_\gamma^{(\Omega)})$ and $(\Lambda_{\wt \gamma}^{(\Omega)}, \Pi_{\wt \gamma}^{(\Omega)})$ the data corresponding to $\gamma\in\cY$ and $\wt\gamma\in\cY$ respectively. 
Let $\bx_0\in X$ be arbitrary. Then there exists $\epsilon=\epsilon(\eta,\sigma,\bx_0)>0$ such that 
for every $\Omega\subset B(\bx_0, \epsilon)$, there exist $g_1$ and $h_1$ in \eqref{EQ:BCs One Parameter Family} such that 
\[
    (H^{(3)}, J_u^{(2)},  J_v^{(2)})=(\wt H^{(3)}, \wt J_u^{(2)}, \wt J_v^{(2)}) \implies \gamma=\wt \gamma\,,
\]
and for all $p>1$, 
\begin{multline}\label{EQ:Stability}
    \|\gamma-\wt{\gamma}\|_{L^p(\Omega)} \le C\Big(\Big\|H^{(3)} - \wt{H}^{(3)}\Big\|_{W^{2,p}(\Omega)} 
    \\+ \Big\|J_u^{(2)} - \wt J_u^{(2)}\Big\|_{W^{1-1/p, p}(\partial\Omega)}
    + \Big\|J_v^{(2)} - \wt J_v^{(2)}\Big\|_{W^{1-1/p, p}(\partial\Omega)} \Big),
\end{multline}
for some constant $C=C(\Omega,\Gamma,\eta,\sigma)>0$.
\end{theorem}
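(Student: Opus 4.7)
The plan is to read the system \eqref{EQ:Lin Sys}, together with its Cauchy boundary data, as a redundant elliptic system in the five unknowns $(u^{(2)}, u^{(2)*}, v^{(2)}, v^{(2)*}, \gamma)$ and then apply the $L^p$ theory of Bal~\cite{Bal-CM13} reviewed in Appendix~\ref{APP:Elliptic system theory}. In the Douglis--Nirenberg convention I would assign weights $s=(0,0,0,0,-2)$ to the five equations and $t=(2,2,2,2,0)$ to the five unknowns, which places the four Helmholtz equations at principal order two and the algebraic internal-data equation at principal order zero, with every entry of the system fitting into this grading.

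The first step is to extract the pointwise non-degeneracy condition that makes the principal symbol injective. With this grading, the principal symbol matrix at $(\bx,\xi)$ has the block form
\begin{displaymath}
    \mathcal{P}(\bx,\xi)=\begin{pmatrix} -|\xi|^{2} I_{4} & \mathbf{c}(\bx) \\ \mathbf{r}(\bx) & 0 \end{pmatrix},
\end{displaymath}
where $\mathbf{c}(\bx)$ is the column vector of source prefactors appearing in the four PDEs and $\mathbf{r}(\bx)=(u^{(1)*},u^{(1)},v^{(1)*},v^{(1)})$ is the row vector of coefficients in the algebraic constraint. A Schur-complement computation gives $\det \mathcal{P}(\bx,\xi) = |\xi|^{6}\,\mathbf{r}(\bx)\cdot\mathbf{c}(\bx)$, and expanding the dot product reduces ellipticity, uniformly for $\xi\neq 0$, to the single scalar condition $\mathrm{Re}\bigl((u^{(1)}(\bx))^{2}\,v^{(1)*}(\bx)\bigr)\neq 0$.

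The next step is to realize this pointwise condition at $\bx_{0}$ by a suitable choice of $g_{1}$ and $h_{1}$. Since $(u^{(1)},v^{(1)})$ solves the \emph{decoupled} linear Helmholtz system \eqref{EQ:Order eps}, whose coefficients $\eta,\sigma$ are already known from Section~\ref{SEC:IP Step I} and independent of $\gamma$, this is a standard Runge-density / complex-geometrical-optics construction: build approximate CGO solutions $u^{(1)}\sim e^{\rho_{1}\cdot\bx}(1+o(1))$ and $v^{(1)}\sim e^{\rho_{2}\cdot\bx}(1+o(1))$ on a neighborhood of $\bx_{0}$, choose the complex phases so that $(u^{(1)})^{2}v^{(1)*}$ has a prescribed nonzero real part at $\bx_{0}$, and take $g_{1}=u^{(1)}|_{\partial\Omega}$ and $h_{1}=v^{(1)}|_{\partial\Omega}$. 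Continuity of $u^{(1)},v^{(1)}$ then forces the non-degeneracy to persist on some ball $B(\bx_{0},\epsilon)$ whose radius depends only on $\eta,\sigma,\bx_{0}$ and this fixed choice of boundary data, which explains the dependence $\epsilon=\epsilon(\eta,\sigma,\bx_{0})$ in the statement.

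With the principal-symbol injectivity established on $B(\bx_{0},\epsilon)\supset\Omega$ and the overdetermined Dirichlet+Neumann boundary data in hand, I would apply the $L^{p}$ estimate for redundant elliptic systems of \cite{Bal-CM13} to the difference $(u^{(2)}-\wt u^{(2)},\,v^{(2)}-\wt v^{(2)},\,\gamma-\wt\gamma)$. Since the Dirichlet-trace differences vanish by construction, the estimate collapses to a bound of the relevant Sobolev norms of the differences by $\|H^{(3)}-\wt H^{(3)}\|_{W^{2,p}(\Omega)}$ and the two Neumann-trace differences, and restricting to the $\gamma$ component yields precisely \eqref{EQ:Stability}; uniqueness is then the special case of vanishing data. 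The main obstacle I expect is the CGO step itself: because $q_{1}$ and $q_{2}$ are complex-valued and only $\cC^{2}$-smooth, the cleanest available construction is a small-ball perturbation of constant-coefficient plane waves, and this is what forces the localization $\Omega\subset B(\bx_{0},\epsilon)$ in the statement.
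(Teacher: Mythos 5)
Your overall strategy is the same as the paper's: recast \eqref{EQ:Lin Sys} with its Cauchy data as a Douglis--Nirenberg elliptic system, derive a pointwise non-degeneracy condition on $(u^{(1)},v^{(1)})$, realize it at $\bx_0$ by choosing $g_1,h_1$, and invoke the small-domain uniqueness and $L^p$ estimates from \cite{Bal-CM13}. Your ellipticity condition is also correct and in fact identical to the paper's: your Schur-complement computation gives $\mathbf{r}\cdot\mathbf{c}=20k^2\,\Re\bigl((u^{(1)})^2v^{(1)*}\bigr)$, and the condition $\Re\bigl((u^{(1)})^2v^{(1)*}\bigr)\neq 0$ is exactly the paper's $-(u^{(1)})^2/(u^{(1)*})^2\neq v^{(1)}/v^{(1)*}$.

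The genuine gap is in how you set up the Douglis--Nirenberg grading. You keep $\gamma$ as a fifth unknown with weights $s=(0,0,0,0,-2)$, $t=(2,2,2,2,0)$, i.e.\ a system with \emph{mixed} orders. But the two results you then need --- the small-domain uniqueness statement (Theorem~\ref{THM:Small Domain Uniqueness}) and the a priori estimate (Theorem~\ref{THM:Elliptic Dirichlet Stability Estimate}) --- are stated, both in Appendix~\ref{APP:Elliptic system theory} and in the relevant part of \cite{Bal-CM13}, only under the restriction \eqref{EQ:Douglis-Nirenberg Numbers Condition} that $s_i=0$ for every row and $t_j=\tau$ is the same for every column. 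Your grading violates both, so the quoted theorems do not apply as stated; you would have to prove (or locate) mixed-order versions, including a verification of the complementing condition for the boundary data when one unknown carries no boundary conditions at all. The paper avoids this entirely by two reductions you skip: it eliminates $\wh\gamma$ algebraically using the fourth Helmholtz equation, and it applies $\Delta$ to the internal-data identity so that the remaining $4\times 4$ system in $(\wh u^{(2)},\wh u^{(2)*},\wh v^{(2)},\wh v^{(2)*})$ has uniform weights $s_i=0$, $t_j=2$; the coefficient $\wh\gamma$ is then recovered a posteriori from the first equation, which is where the $W^{2,p}\to L^p$ loss in \eqref{EQ:Stability} comes from. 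Either perform this reduction or supply the mixed-order theory; as written the key analytic step is unsupported. A minor additional remark: the CGO construction for $u^{(1)},v^{(1)}$ is unnecessary --- since the non-degeneracy is an open condition at the single point $\bx_0$, it suffices to take any local solutions of the two decoupled Helmholtz equations that are nonzero at $\bx_0$ and rescale one of them by a complex constant, which is what the paper does and which sidesteps any smoothness concerns about $q_1,q_2$.
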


\begin{proof} 
Let us define 
\begin{align*}
    \wh{u} := u - \wt{u},\quad \wh{v} := v - \wt{v},\quad \wh{\gamma} := \gamma - \wt{\gamma},\quad \wh{H} := H - \wt{H}, \quad \wh J_u := J_u - \wt J_u,\quad \wh J_v := J_v - \wt J_v.
\end{align*}
The linear system~\eqref{EQ:Lin Sys} then implies that 
\begin{equation}
\begin{array}{rcll}
    (\Delta + q_1)\wh{u}^{(2)} + 2k^2 u^{(1)*}v^{(1)}\wh{\gamma} &=& 0, \label{EQ:gamma hat}&\mbox{in}\ \ \Omega\\
    (\Delta + q_1^*)\wh{u}^{(2)*} + 2k^2 u^{(1)}v^{(1)*}\wh{\gamma} &=& 0,&\mbox{in}\ \  \Omega\\
    (\Delta + q_2)\wh{v}^{(2)} + 2(2k)^2(u^{(1)})^2\wh{\gamma} &=& 0, &\mbox{in}\ \  \Omega\\
    (\Delta + q_2^*)\wh{v}^{(2)*} + 2(2k)^2(u^{(1)*})^2\wh{\gamma} &=& 0, &\mbox{in}\ \Omega\\
u^{(1)*}\wh{u}^{(2)}+u^{(1)}\wh{u}^{(2)*}+v^{(1)*}\wh{v}^{(2)}+v^{(1)}\wh{v}^{(2)*} &=& \frac{\wh{H}^{(3)}}{3\Gamma\sigma}, &\mbox{in}\  \ \Omega\\(\wh{u}^{(2)},\wh{u}^{(2)*},\wh{v}^{(2)},\wh{v}^{(2)*}) &=& (0,0,0,0),&\mbox{on}\  \  \partial\Omega\\
    \left(\pdr{\wh{u}^{(2)}}{\nu},\pdr{\wh{u}^{(2)*}}{\nu},\pdr{\wh{v}^{(2)}}{\nu},\pdr{\wh{v}^{(2)*}}{\nu}\right) &=& (\wh J_u^{(2)},\wh J_u^{(2)*},\wh J_v^{(2)},\wh J_v^{(2)*}),&\mbox{on}\  \ \partial\Omega\,.
\end{array}
\end{equation}

We first eliminate $\wh{\gamma}$ by plugging in $\wh{\gamma} = -\frac{(\Delta+q_2^*)\wh{v}^{(2)*}}{2(2k)^2(u^{(1)*})^2}$ from the fourth equation into the first three equations. We then take the Laplacian of the last equation. These procedures lead us to the linear system 
\begin{align*}
    (\Delta + q_1)\wh{u}^{(2)} -\frac{v^{(1)}}{4u^{(1)*}}(\Delta + q_2^*)\wh{v}^{(2)*} &= 0,\\
    (\Delta + q_1^*)\wh{u}^{(2)*} - \frac{u^{(1)}v^{(1)*}}{4(u^{(1)*})^2}(\Delta + q_2^*)\wh{v}^{(2)*} &= 0,\\
    (\Delta + q_2)\wh{v}^{(2)} - \frac{(u^{(1)})^2}{(u^{(1)*})^2}(\Delta + q_2^*)\wh{v}^{(2)*} &= 0, \\    \Delta(u^{(1)*}\wh{u}^{(2)})+\Delta(u^{(1)}\wh{u}^{(2)*})+\Delta(v^{(1)*}\wh{v}^{(2)})+\Delta(v^{(1)}\wh{v}^{(2)*}) &= \Delta\left(\frac{\wh{H}^{(3)}}{3\Gamma\sigma}\right),
\end{align*}
in the unknowns $\wh{u}^{(2)}$, $\wh{u}^{(2)*}$, $\wh{v}^{(2)}$, and $\wh{v}^{(2)*}$. The system may be written in the following matrix form, for the quantity $w:=(\wh{u}^{(2)},\wh{u}^{(2)*},\wh{v}^{(2)},\wh{v}^{(2)*})$:
\begin{equation}\label{EQ:Elliptic system}
    \begin{array}{rcll}
    \cA w &=& \cS, & \mbox{in}\ \ \Omega\\
    w&=& (0,0,0,0),&\mbox{on}\ \partial\Omega\\
    \pdr{w}{\nu} &=& (\wh J_u^{(2)},\wh J_u^{(2)*},\wh J_v^{(2)},\wh J_v^{(2)*}),&\mbox{on}\ \partial\Omega\,,
    \end{array}
\end{equation}
where
\begin{align*}
    \cA(\bx,D) := \begin{pmatrix}
    \Delta+q_1 & & & -\frac{v^{(1)}}{4u^{(1)*}}(\Delta + q_2^*)\\
    & \Delta+q_1^*& & -\frac{u^{(1)}v^{(1)*}}{4(u^{(1)*})^2}(\Delta + q_2^*)\\
    & & \Delta+q_2 & -\frac{(u^{(1)})^2}{(u^{(1)*})^2}(\Delta + q_2^*)\\
    \Delta(u^{(1)*}\cdot) & \Delta(u^{(1)}\cdot) & \Delta(v^{(1)*}\cdot) & \Delta(v^{(1)}\cdot)
    \end{pmatrix}\,,
\end{align*}
and
\begin{align*}
S := \left(0,0,0,\Delta\left(\frac{\wh{H}^{(3)}}{3\Gamma\sigma}\right)\right)\,.
\end{align*}

In the rest of the proof, we show that $\cA$ is an elliptic operator in the sense of Douglis and Nirenberg~\cite{Bal-CM13}. For the convenience of the reader, we provide a brief review of elliptic system theory in Appendix~\ref{APP:Elliptic system theory}. We choose the Douglis-Nirenberg numbers 
\begin{align}\label{EQ:Douglis Nirenberg numbers}
    (s_1,s_2,s_3,s_4) = (0,0,0,0),\qquad (t_1,t_2,t_3,t_4) = (2,2,2,2).
\end{align}
The principal part $\cA_0(\bx,D)$ of $\cA$ has symbol 
\begin{align*}
    \cA_0(\bx,\bxi) := \begin{pmatrix}
    |\bxi|^2 & & & -\frac{v^{(1)}}{4u^{(1)*}}|\bxi|^2\\
    & |\bxi|^2& & -\frac{u^{(1)}v^{(1)*}}{4(u^{(1)*})^2}|\bxi|^2\\
    & & |\bxi|^2 & -\frac{(u^{(1)})^2}{(u^{(1)*})^2}|\bxi|^2\\
    u^{(1)*}|\bxi|^2 & u^{(1)}|\bxi|^2 & v^{(1)*}|\bxi|^2 & v^{(1)}|\bxi|^2
    \end{pmatrix}\,.
\end{align*}

One readily sees that $\cA_0(\bx_0,\bxi)$ has full rank $4$ for all $\bxi\neq 0$ if and only if the following condition holds at $\bx_0$: 
\begin{align*}
    -\frac{v^{(1)}}{4u^{(1)*}}\cdot u^{(1)*} -\frac{u^{(1)}v^{(1)*}}{4(u^{(1)*})^2}\cdot u^{(1)} -\frac{(u^{(1)})^2}{(u^{(1)*})^2}\cdot v^{(1)*} \neq v^{(1)},
\end{align*}
or equivalently 
\begin{align*}
    -\frac{(u^{(1)})^2}{(u^{(1)*})^2} \neq \frac{v^{(1)}}{v^{(1)*}}
\end{align*}
at $\bx_0$. This condition on $u^{(1)}(\bx_0)$ and $v^{(1)}(\bx_0)$ is easily achieved by selecting $g_1$ and $h_1$ appropriately. To be precise, let us consider some ball $B(\bx_0,\epsilon_0) \subset X$ and let $u_0$ and $v_0$ be any $\cC^2$ functions on $B(\bx_0,\epsilon_0)$ satisfying
\begin{align*}
    \Delta u_0+ q_1 u_0 &= 0,\quad  \mbox{in}\ B(\bx_0,\epsilon_0), \\
    \Delta v_0 +  q_2 v_0 &= 0,\quad \mbox{in}\ B(\bx_0,\epsilon_0), \\
    -\frac{u_0^2}{(u_0^*)^2}(\bx_0) &\neq \frac{v_0}{v_0^*}(\bx_0).
\end{align*}
The existence of such $u_0$ and $v_0$ is obvious as we can take $u_0$ and $v_0$ to be any solutions to the first two equations and rescale them by a suitable complex constant to satisfy the condition $-\frac{u_0^2}{(u_0^*)^2}(\bx_0) \neq \frac{v_0}{v_0^*}(\bx_0)$.
It is also useful to observe that $u_0$ and $v_0$ depend only on $\eta$ and $\sigma$, not $\gamma$.

Now suppose $\Omega\subset B(\bx_0,\epsilon_0)$, and select $g_1=u_0|_{\partial\Omega}$ and $h_1=v_0|_{\partial\Omega}$ in~\eqref{EQ:Order eps}, so that 
\begin{align*}
u^{(1)}=u_0|_\Omega,\qquad v^{(1)}=v_0|_\Omega\,.
\end{align*}
This means we can write $\cA$ as 
\begin{align*}
    \cA(\bx,D) := \begin{pmatrix}
    \Delta+q_1 & & & -\frac{v_0}{4u_0^*}(\Delta + q_2^*)\\
    & \Delta+q_1^*& & -\frac{u_0 v_0^*}{4(u_0^*)^2}(\Delta + q_2^*)\\
    & & \Delta+q_2 & -\frac{u_0^2}{(u_0^*)^2}(\Delta + q_2^*)\\
    \Delta(u_0^*\cdot) & \Delta(u_0\cdot) & \Delta(v_0^*\cdot) & \Delta(v_0\cdot)
    \end{pmatrix}\,.
\end{align*}
By construction, the constant-coefficient operator $\cA(\bx_0,D)$ with coefficients frozen at $x_0$ is elliptic. Additionally, from the continuity of $u_0$ and $v_0$ we see that there exists $\epsilon_1=\epsilon_1(\eta,\sigma,\bx_0)>0$ such that $-\frac{u_0^2}{(u_0^*)^2}\neq \frac{v_0}{v_0^*}$ on $B(\bx_0,\epsilon_1)$. That is, for every $\Omega\subset B(\bx_0,\epsilon_1)$, the operator $\cA(\bx,D)$ is elliptic on $\Omega$. 

Moreover, observe that the Douglis-Nirenberg numbers~\eqref{EQ:Douglis Nirenberg numbers} of $\cA$ satisfy $s_i=0$ for all $i$ and $t_j = 2$ is independent of $j$. This means that the uniqueness theory for elliptic systems presented in~\cite[Section 3]{Bal-CM13} applies. More specifically, by Theorem~\ref{THM:Small Domain Uniqueness}, we conclude that there exists $\epsilon_2=\epsilon_2(\eta,\sigma,\bx_0)>0$ such that for every $\Omega\subset B(\bx_0,\epsilon_2)$, the boundary value problem \eqref{EQ:Elliptic system} has a unique solution. 

Now set $\epsilon = \min\{\epsilon_1,\epsilon_2\}>0$ and let $\Omega\subset B(\bx_0,\epsilon)$, so that $\cA$ is an elliptic operator on $\Omega$ and the problem \eqref{EQ:Elliptic system} has a unique solution.

By elliptic regularity estimate Theorem~\ref{THM:Elliptic Dirichlet Stability Estimate} applied to \eqref{EQ:Elliptic system} with $\ell=0$, there exist constants $C=C(\Omega,\eta,\sigma,u_0) = C(\Omega,\eta,\sigma)$ and $C_2=C_2(\Omega,\eta,\sigma)$ such that 
\begin{align*}
    \|\wh{u}^{(2)}\|_{W^{2,p}(\Omega)} + \|\wh{v}^{(2)}\|_{W^{2,p}(\Omega)}&\le C\Bigg(\left\|\Delta\left(\frac{\wh{H}^{(3)}}{3\Gamma\sigma}\right)\right\|_{L^p(\Omega)} 
    + \left\|\wh J_u^{(2)}\right\|_{W^{1-1/p,p}(\partial\Omega)}\\
    &\qquad\qquad + \left\|\wh J_v^{(2)}\right\|_{W^{1-1/p,p}(\partial\Omega)}\Bigg) + C_2(\|\wh{u}^{(2)}\|_{L^p(\Omega)} + \|\wh{v}^{(2)}\|_{L^p(\Omega)}).
\end{align*}
In fact, since the solution is unique we may drop the last term on the right-hand side, i.e. set $C_2=0$. This gives 
\begin{align*}
    &\|\wh{u}^{(2)}\|_{W^{2,p}(\Omega)} + \|\wh{v}^{(2)}\|_{W^{2,p}(\Omega)}\\
    &\le C\left(\left\|\Delta\left(\frac{\wh{H}^{(3)}}{3\Gamma\sigma}\right)\right\|_{L^p(\Omega)} 
    + \left\|\wh J_u^{(2)}\right\|_{W^{1-1/p,p}(\partial\Omega)} + \left\|\wh J_v^{(2)}\right\|_{W^{1-1/p,p}(\partial\Omega)}\right)\\
    &\le C\left(\|\wh{H}^{(3)}\|_{W^{2,p}(\Omega)} 
    + \left\|\wh J_u^{(2)}\right\|_{W^{1-1/p,p}(\partial\Omega)} + \left\|\wh J_v^{(2)}\right\|_{W^{1-1/p,p}(\partial\Omega)}\right),
\end{align*}
where $C=C(\Omega,\Gamma,\eta,\sigma)$.

Finally we substitute this estimate into~\eqref{EQ:gamma hat} to obtain 
\begin{align*}
    \|\wh{\gamma}\|_{L^p(\Omega)} &= \left\|-\frac{(\Delta + q_1)\wh{u}^{(2)}}{2k^2 u_0^* v_0}\right\|_{L^p(\Omega)} \le C\|\wh{u}^{(2)}\|_{W^{2,p}(\Omega)}\\
    &\le C\left(\|\wh{H}^{(3)}\|_{W^{2,p}(\Omega)} 
    + \left\|\wh J_u^{(2)}\right\|_{W^{1-1/p,p}(\partial\Omega)} + \left\|\wh J_v^{(2)}\right\|_{W^{1-1/p,p}(\partial\Omega)}\right),
\end{align*}
where once again $C=C(\Omega,\Gamma,\eta,\sigma)$. This is precisely the desired stability estimate \eqref{EQ:Stability}.
\end{proof}

The above theory on the reconstruction of the coefficient $\gamma$ requires both the availability of the additional boundary data~\eqref{EQ:DtN} and the assumption that $\Omega$ is sufficiently small. We made these assumptions merely to simplify the proof. We believe that they can be removed without breaking the uniqueness and stability results.

\section{Numerical experiments}
\label{SEC:Numerics}

We now present some numerical simulations to demonstrate the quality of reconstructions that can be achieved for the inverse problem. 

We will perform numerical reconstructions with a slightly simplified version of model~\eqref{EQ:SHG}:
\begin{equation}\label{EQ:SHG Robin One Way Coupled}
	\begin{array}{rcll}
  	\Delta u+ k^2 (1+\eta) u +ik\sigma u&=& 0, & \mbox{in}\ \ \Omega\\
  	\Delta v+ (2k)^2 (1+\eta) v +i2k\sigma v &=& -(2k)^2 \gamma u^2,  &\mbox{in}\ \ \Omega\\
	u = g, \qquad v+i2k\bnu\cdot \nabla v &=& 0, & \mbox{on}\ \partial\Omega
	\end{array}
\end{equation}
In other words, we omit the backward coupling term on the right-hand side of the first equation in~\eqref{EQ:SHG}. 
This model is connected to the linearized problem in~\eqref{EQ:Order eps2}. Indeed, if we take the boundary condition of $v^{(1)}$ to be $0$, that is, $h_1=0$ in~\eqref{EQ:Order eps}, then the first equation in~\eqref{EQ:Order eps} and the second equation in~\eqref{EQ:Order eps2} can be combined to get~\eqref{EQ:SHG Robin One Way Coupled}.
Note that we intentionally changed the Dirichlet boundary condition for $v$ to the more realistic Robin boundary condition. Moreover, due to the fact that the equations in the model~\eqref{EQ:SHG Robin One Way Coupled} are only one-way coupled, we are not limited to the usage of small boundary data $g$.

The measured interior data still take the form~\eqref{EQ:Data Form}. We will use data generated from $N_s\ge 1$ different boundary conditions $\{g_{j}\}_{j=1}^{N_s}$: $\{H_j\}_{j=1}^{N_s}$.



The numerical reconstructions are performed using standard least-squares optimization procedures that we will outline below. The computational implementation of the numerical simulations in this section can be found at \url{https://github.com/nsoedjak/Imaging-SHG}. All of the following numerical experiments can be reproduced by simply running the appropriate example file (e.g., \texttt{Experiment\_I\_gamma.m} for Numerical Experiment I).


\paragraph{Numerical Experiment I: reconstructing $\gamma$.} We start with reconstructing the coefficient $\gamma$, assuming all other coefficients are known. The reconstruction is achieved with an optimization algorithm that finds $\gamma$ by minimizing the functional
\begin{equation*}
	\Phi(\gamma) := \dfrac{1}{2}\sum_{j=1}^{N_s}\|\Gamma\sigma(|u_j|^2 + |v_j|^2) -H_j\|_{L^2(\Omega)}^2+\frac12\beta \|\nabla \gamma\|_{L^2(\Omega)}^2\,,
\end{equation*}
where we assume that we have collected data from $N_s$ different boundary conditions $\{g_j\}_{j=1}^{N_s}$. The regularization parameter $\beta$ will be selected with a trial and error approach. Following the standard adjoint-state method, we introduce the adjoint equations 
\begin{equation*}
	\begin{array}{rcll}
  	\Delta w_j+ (2k)^2 (1+\eta) w_j +i2k\sigma w_j &=& -\Big[\Gamma\sigma(|u_j|^2+|v_j|^2) - H_j\Big]\Gamma\sigma v_j^*,  &\mbox{in}\ \ \Omega\\
	w_j+i2k\bnu\cdot \nabla w_j &=& 0, & \mbox{on}\ \partial\Omega
	\end{array}
\end{equation*}
It is then straightforward to verify that the Fr\'echet derivative of $\Phi$ in direction $\delta \gamma$ can be written as:
\begin{align*}
    \Phi'(\gamma)[\delta\gamma] = \int_{\Omega} \left(2(2k)^2 \Re\sum_{j=1}^{N_s}  w_j u_j^2\right)\delta\gamma\,d\bx
    -\beta\int_{\Omega} (\Delta\gamma)\delta\gamma\,d\bx + \beta\int_{\partial\Omega} \pdr{\gamma}{\nu}\delta\gamma\,dS
\end{align*}
Once we have the gradient of the objective function with respect to $\gamma$, we feed it into a quasi-Newton optimization algorithm with the BFGS updating rule on the Hessian, implemented in {\rm MATLAB}. 
\begin{figure}[!htb]
\centering
\includegraphics[width=.31\linewidth]{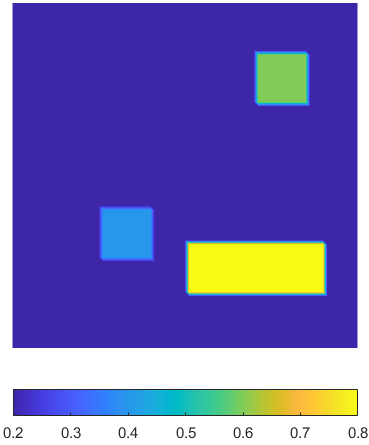}
\includegraphics[width=.31\linewidth]{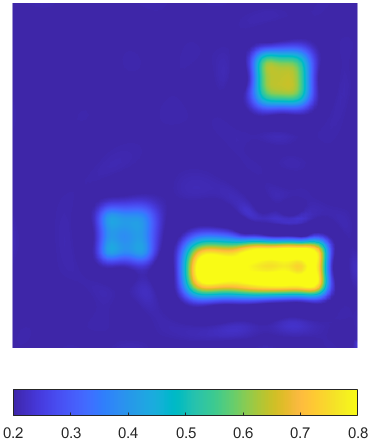}
\includegraphics[width=.31\linewidth]{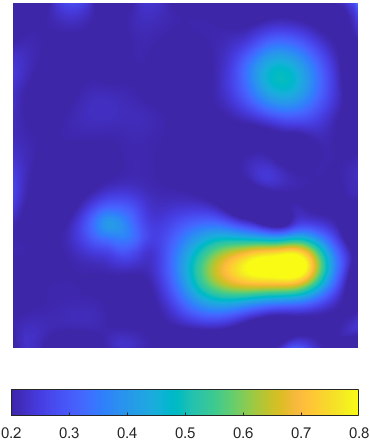}
\caption{True $\gamma$ (left), $\gamma$ reconstructed from noise-free data (middle), and $\gamma$ reconstructed from data containing 1\% random noise (right).}
\label{FIG:gamma}
\end{figure}

Figure~\ref{FIG:gamma} shows the reconstruction of a simple profile of $\gamma$ from both noise-free and noisy data. The regularization parameter is set to be $\beta=10^{-7}$ for this particular case. The quality of the reconstructions is reasonable by visual inspection. Similar levels of reconstruction quality are observed for various $\gamma$ profiles we tested. The regularization parameter is selected in a trial-and-error manner. The value of $\beta$ we used in the simulations may not be the ones to give the best reconstructions. However, we are not interested in tuning the regularization parameter to improve the reconstruction quality slightly. Therefore, we will not discuss this issue here.

\paragraph{Numerical Experiment II: reconstructing $(\eta, \sigma,\gamma)$.} In the second numerical example, we consider the case where $\Gamma$ is known but $\eta$, $\sigma$, and $\gamma$ are unknown. The inversions are done with a least-squares minimization algorithm that is similar to the one used in Numerical Experiment I. Figure~\ref{FIG:n-sigma-gamma noiseless data} shows that we are still able to obtain good numerical reconstructions, at least in the case when the profiles of $\eta$ and $\gamma$ are simple.
\begin{figure}[!htb]
\centering
\includegraphics[width=.31\linewidth]{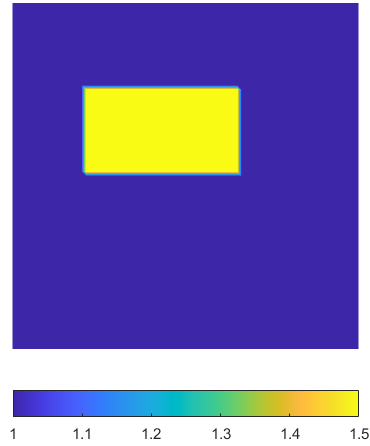}
\includegraphics[width=.31\linewidth]{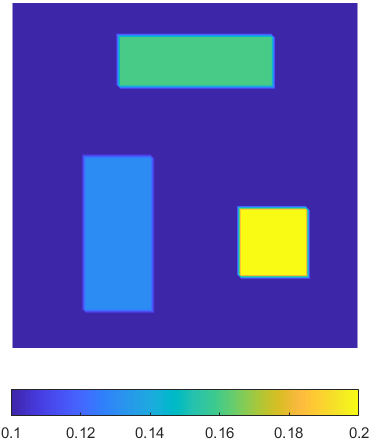}
\includegraphics[width=.31\linewidth]{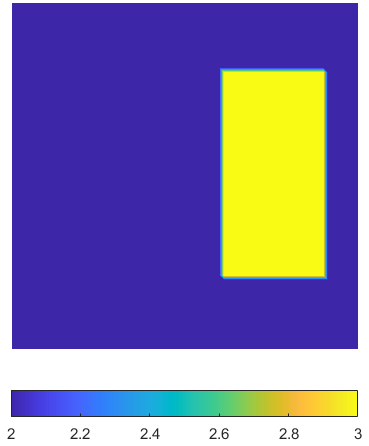}
\centering
\includegraphics[width=.31\linewidth]{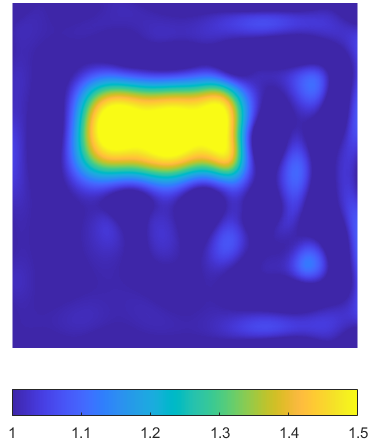}
\includegraphics[width=.31\linewidth]{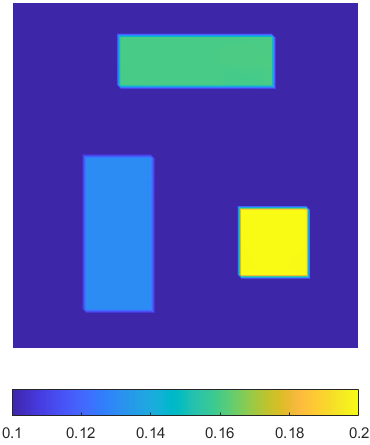}
\includegraphics[width=.31\linewidth]{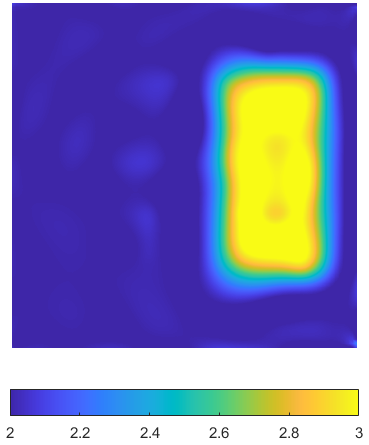}
\caption{True (top) and reconstructed (bottom) $\eta$ (left), $\sigma$ (middle), and $\gamma$ (right) in Numerical Experiment II.}
\label{FIG:n-sigma-gamma noiseless data}
\end{figure}

\paragraph{Numerical Experiment III: reconstructing $(\eta,\gamma,\Gamma)$.} In this example, we assume that $\sigma$ is known and we are interested in reconstructing $\eta$, $\gamma$ and $\Gamma$. Due to the fact that $\Gamma$ only appears in the measurement, not the PDE model, a naive least-squares minimization formulation like the ones in the previous examples will lead to unbalanced sensitivity between $\Gamma$ and the rest of the parameters. Hence we instead take a two-step reconstruction approach. In the first step, we use the ratio between measurements to eliminate $\Gamma$. That is, we minimize the functional
\begin{equation*}
	\Psi(\eta,\gamma) := \dfrac{1}{2}\sum_{j=2}^{N_s}\left\|\frac{|u_j|^2 + |v_j|^2}{|u_1|^2+|v_1|^2} - \frac{H_j}{H_1}\right\|_{L^2(\Omega)}^2 + \frac12\beta_1\|\nabla \eta\|_{L^2(\Omega)}^2 + \frac12\beta_2\|\nabla \gamma\|_{L^2(\Omega)}^2\,,
\end{equation*}
where we assume that we have collected data from $N_s$ different boundary conditions $\{g_j\}_{j=1}^{N_s}$. It is clear that $\Psi$ only depends on $\eta$ and $\gamma$, not $\Gamma$. The Fr\'echet derivatives of $\Psi$ can again be found using the standard adjoint-state method. For example, for the derivative with respect to $\gamma$, we introduce the adjoint equations 
\begin{equation*}
	\begin{array}{rcll}
  	\Delta w_j+ (2k)^2 (1+\eta) w_j +i2k\sigma w_j &=& -\left(\dfrac{|u_j|^2+|v_j|^2}{|u_1|^2+|v_1|^2}-\dfrac{H_j}{H_1}\right)\dfrac{1}{|u_1|^2+|v_1|^2}v_j^*,  &\mbox{in}\ \ \Omega\\
	w_j+i2k\bnu\cdot \nabla w_j &=& 0, & \mbox{on}\ \partial\Omega
	\end{array}
\end{equation*}
and 
\begin{equation*}
	\begin{array}{rcll}
  	\Delta z_j+ (2k)^2 (1+\eta) z_j +i2k\sigma z_j &=& \left(\dfrac{|u_j|^2+|v_j|^2}{|u_1|^2+|v_1|^2}-\dfrac{H_j}{H_1}\right)\dfrac{|u_j|^2+|v_j|^2}{(|u_1|^2+|v_1|^2)^2}v_1^*,  &\mbox{in}\ \ \Omega\\
	z_j+i2k\bnu\cdot \nabla z_j &=& 0, & \mbox{on}\ \partial\Omega.
	\end{array}
\end{equation*}
It is then straightforward to verify that the Fr\'echet derivative of $\Psi$ with respect to $\gamma$ in direction $\delta \gamma$ can be written as:
\begin{equation*}
    \Psi'_{\gamma}(\eta,\gamma)[\delta\gamma] = \int_{\Omega} \left(2(2k)^2\Re\sum_{j=1}^{N_s} \left[w_j u_j^2 + z_j u_1^2\right]\right)\delta\gamma\,d\bx
    -\beta_2\int_{\Omega} (\Delta\gamma)\delta\gamma\,d\bx + \beta_2\int_{\partial\Omega} \pdr{\gamma}{\nu}\delta\gamma\,dS\,.
\end{equation*}
The Fr\'echet derivative with respect to $\eta$ can be computed in a similar fashion. 
Once $\eta$ and $\gamma$ are reconstructed, we can reconstruct $\Gamma$ as
\begin{equation}\label{EQ:Gamma reconstruction}
    \Gamma = \frac{1}{N_s}\sum_{j=1}^{N_s} \frac{H_j}{\sigma (|u_j|^2+|v_j|^2)}\,.
\end{equation}

\begin{figure}[!htb]
\centering
\includegraphics[width=.31\linewidth]{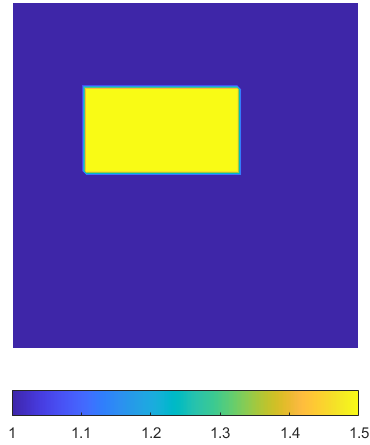}
\includegraphics[width=.31\linewidth]{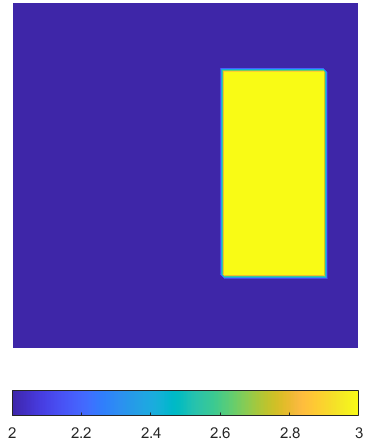}
\includegraphics[width=.31\linewidth]{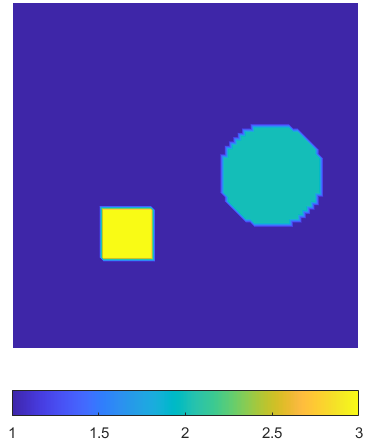}
\centering
\includegraphics[width=.31\linewidth]{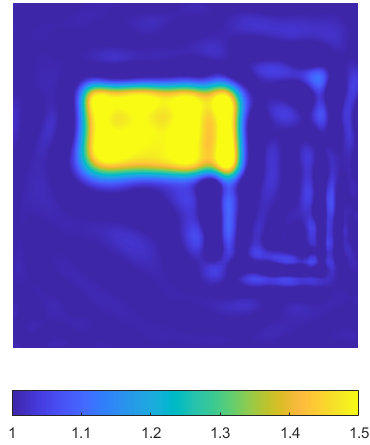}
\includegraphics[width=.31\linewidth]{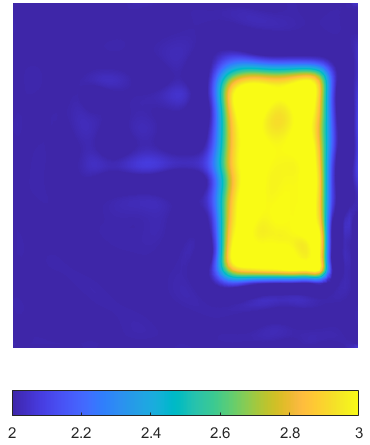}
\includegraphics[width=.31\linewidth]{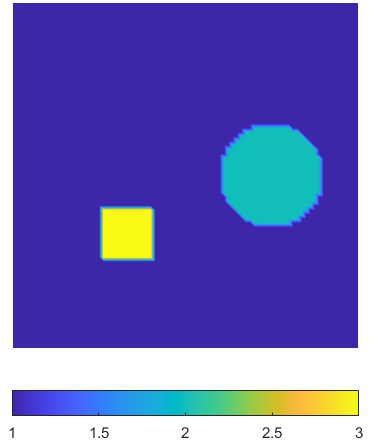}
\caption{True (top) and reconstructed (bottom) $\eta$ (left), $\gamma$ (middle), and $\Gamma$ (right).}
\label{FIG:n-gamma-Gamma noiseless data}
\end{figure}
A typical reconstruction is shown in Figure~\ref{FIG:n-gamma-Gamma noiseless data}. The reconstructions are highly accurate in this case.

\paragraph{Numerical Experiment IV: reconstructing $(\eta,\sigma,\gamma,\Gamma)$.}
Figure~\ref{FIG:n-sigma-gamma-Gamma noiseless data} shows a typical reconstruction of all four coefficients simultaneously. The reconstruction quality is high in the eyeball norm and can be characterized more precisely with numbers such as the relative $L^2$ error. Note from the reconstruction formula~\eqref{EQ:Gamma reconstruction} that any inaccuracies in the reconstruction of $\sigma$ will directly translate into artifacts in the reconstruction of $\Gamma$. This can be observed in Figure~\ref{FIG:n-sigma-gamma-Gamma noiseless data} (see columns 2 and 4), most notably near the edges of the square anomaly in $\sigma$.
\begin{figure}[!htb]
\centering
\includegraphics[width=.24\linewidth]{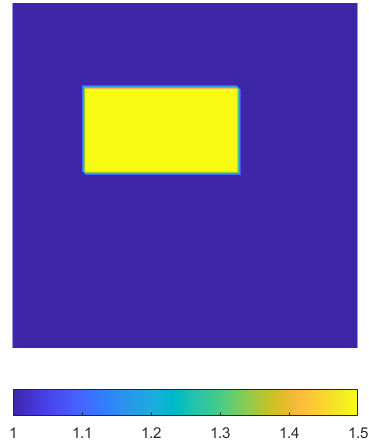}
\includegraphics[width=.24\linewidth]{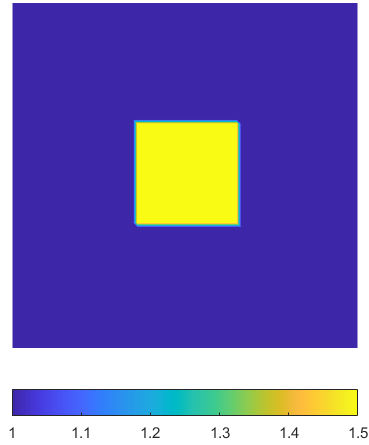}
\includegraphics[width=.24\linewidth]{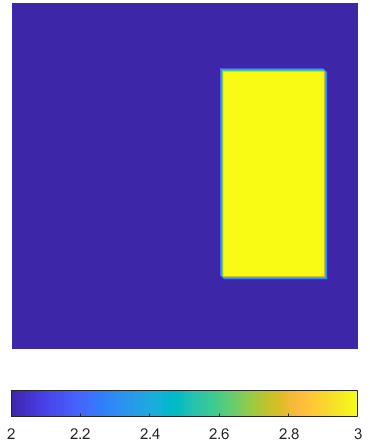}
\includegraphics[width=.24\linewidth]{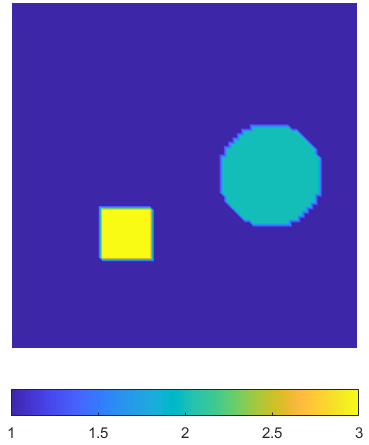}
\centering
\includegraphics[width=.24\linewidth]{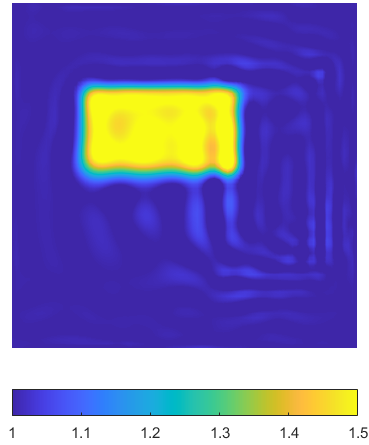}
\includegraphics[width=.24\linewidth]{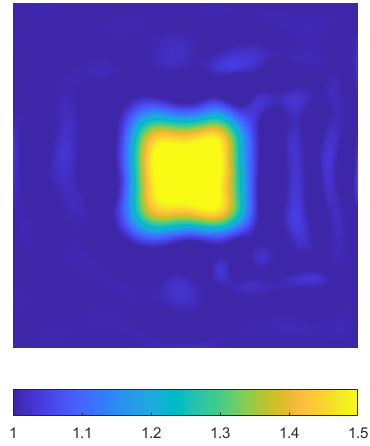}
\includegraphics[width=.24\linewidth]{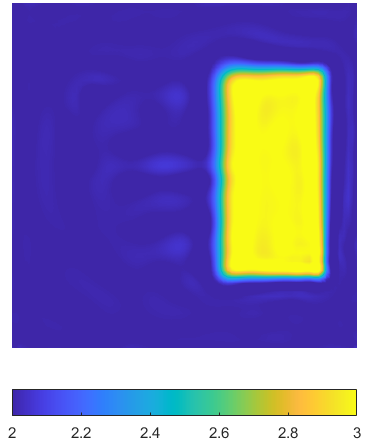}
\includegraphics[width=.24\linewidth]{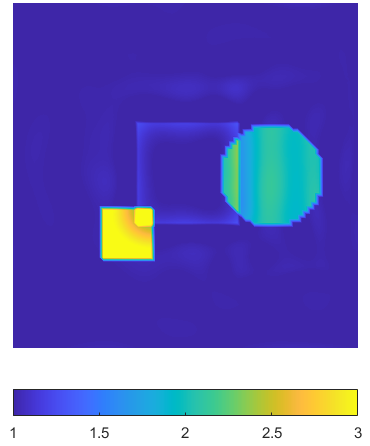}
\caption{True (top) and reconstructed (bottom) $(\eta, \sigma, \gamma, \Gamma)$. From left to right are $\eta$, $\sigma$, $\gamma$, and $\Gamma$.}
\label{FIG:n-sigma-gamma-Gamma noiseless data}
\end{figure}

\section{Concluding remarks}
\label{SEC:Concl}

We performed a systematic study on inverse problems to a system of coupled semilinear Helmholtz equations as the model for second harmonic generation in thermoacoustic imaging. We developed uniqueness and stability theory for the inverse problems utilizing the multilinearization technique. We showed, via both mathematical analysis and numerical simulations, that it is possible to reconstruct all four coefficients of interest from noisy interior data.

While our results show great promise for the solution of the inverse problems, several aspects of our study's technical side still need to be significantly improved. For instance, we have assumed the Dirichlet boundary condition for the generated second harmonic wave $v$ in model~\eqref{EQ:SHG}. This should certainly be replaced with homogeneous Robin-type boundary conditions that are more physical (as what we did in the computational experiments). Moreover, in Theorem~\ref{THM:Reconstruction of gamma}, we should be able to relax the requirement that the domain $\Omega$ is sufficiently small. In the same theorem, we should be able to remove the requirement on the additional Neumann boundary data to have a unique reconstruction of $\gamma$.

We have a few future directions in mind to continue the investigation from the perspective of practical applications. First, our mathematical results are mainly based on the assumption that the incident wave, that is, the Dirichlet boundary condition in system~\eqref{EQ:SHG}, is weak since this is the case where we can establish the well-posedness of the mathematical model. This assumption, however, severely limits the applicability of the analysis for practical applications as one needs to have a sufficiently strong boundary source to generate strong second-harmonic waves in order to see its impact on the data used for inversion.  Second, the linearization method requires access to a sequence of datasets generated from $\eps$-dependent boundary source. This is a large amount of data. It would be interesting to see if our uniqueness and stability results can be reproduced for a finite number of measurements. Third, it would be of great interest to see if one can perform a similar analysis on the same inverse problem to the Maxwell model of second-harmonic generation, such as the model introduced in~\cite{AsZh-JDE21}. In fact, the linearization machinery for the Maxwell model has already been built in~\cite{AsZh-JDE21}. However, it is not obvious whether or not our results can be generalized to the Maxwell model with the same type of data in a straightforward way.

\section*{Acknowledgments}

This work is partially supported by the National Science Foundation through grants DMS-1913309 and DMS-1937254. 

\appendix

\section{Well-posedness of system~\texorpdfstring{\eqref{EQ:SHG}}{}}
\label{APP:Well-posedness}

In this appendix, we establish the well-posedness of the boundary value problem \eqref{EQ:SHG} for sufficiently small boundary illuminations $g$ and $h$ using a standard contraction mapping theorem argument. 

We begin by recording a result on the well-posedness of the Helmholtz problem \eqref{EQ:Helmholtz Scalar}.
\begin{theorem}\label{THM:Helmholtz Well-Posedness} Let $\alpha\in (0,1)$, $q,f\in \cC^{0,\alpha}(\overline{\Omega}; \bbC)$, and $g\in \cC^{2,\alpha}(\partial\Omega; \bbC)$. If $\Im q >0$, then the boundary value problem
\begin{equation}\label{EQ:Helmholtz Scalar General}
	\begin{array}{rcll}
		\Delta u + qu &= &f, & \mbox{in}\ \Omega\\
		u & = & g, & \mbox{on}\ \partial\Omega
	\end{array}
\end{equation}
has a unique solution $u\in \cC^{2,\alpha}(\overline{\Omega};\bbC)$. Moreover, there exists a constant $C=C(\alpha,\Omega,q)$ such that the following Schauder estimate holds:
\begin{align}\label{EQ:Helmholtz Estimate}
    \|u\|_{\cC^{2,\alpha}(\overline{\Omega})} \le C(\|f\|_{\cC^{0,\alpha}(\overline{\Omega})} + \|g\|_{\cC^{2,\alpha}(\partial\Omega)})\,.
\end{align}
\end{theorem}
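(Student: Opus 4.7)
The plan is to follow the classical reduction to homogeneous boundary data, solve the reduced problem via the Fredholm alternative using the damping provided by $\Im q > 0$, and then upgrade to Schauder regularity. First I would extend $g$ to a function $G\in \cC^{2,\alpha}(\overline{\Omega};\bbC)$ with $\|G\|_{\cC^{2,\alpha}(\overline{\Omega})}\le C\|g\|_{\cC^{2,\alpha}(\partial\Omega)}$ using a standard trace-extension theorem. Writing $w:=u-G$, the problem reduces to solving
\begin{equation*}
    \Delta w + qw = F\ \text{in}\ \Omega,\qquad w = 0\ \text{on}\ \partial\Omega,
\end{equation*}
with $F:=f - \Delta G - qG \in \cC^{0,\alpha}(\overline{\Omega};\bbC)$ satisfying $\|F\|_{\cC^{0,\alpha}(\overline{\Omega})}\le C(\|f\|_{\cC^{0,\alpha}(\overline{\Omega})}+\|g\|_{\cC^{2,\alpha}(\partial\Omega)})$.

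For uniqueness, I would multiply $\Delta w+qw=0$ (with $w\in H^1_0(\Omega)$) by $\overline{w}$, integrate by parts, and take the imaginary part to get $\int_\Omega (\Im q)|w|^2\,d\bx = 0$. Since $\Im q$ is continuous and strictly positive on the compact set $\overline{\Omega}$, we have $\Im q \ge c_0>0$, so $w\equiv 0$. The same argument applies to the formal adjoint operator $\Delta + \overline{q}$, whose imaginary part is $-\Im q < 0$ (one repeats the calculation and takes imaginary part with the opposite sign). Hence the Fredholm alternative for the compact perturbation $u\mapsto (-\Delta)^{-1}(qu+F)$ of the identity on $H^1_0(\Omega)$ yields the existence of a unique weak solution $w \in H^1_0(\Omega)$.

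Next I would bootstrap the regularity: since $qw + F \in L^p(\Omega)$ for all $p<\infty$, $W^{2,p}$-estimates place $w$ in $\cC^{1,\alpha}$, after which $qw+F\in \cC^{0,\alpha}(\overline{\Omega})$, and the classical Schauder boundary estimates for $\Delta$ (e.g., Gilbarg--Trudinger) applied to $\Delta w = F - qw$ with zero Dirichlet data yield $w \in \cC^{2,\alpha}(\overline{\Omega})$ together with the estimate
\begin{equation*}
    \|w\|_{\cC^{2,\alpha}(\overline{\Omega})} \le C\big(\|F\|_{\cC^{0,\alpha}(\overline{\Omega})} + \|w\|_{\cC^0(\overline{\Omega})}\big).
\end{equation*}

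The final and main technical step is to absorb the lower-order term $\|w\|_{\cC^0}$ on the right-hand side. I would argue by the standard compactness--contradiction scheme: suppose the improved estimate fails, so there exist sequences $(w_n,F_n)$ with $\|w_n\|_{\cC^{2,\alpha}}=1$ and $\|F_n\|_{\cC^{0,\alpha}}\to 0$. By Arzelà--Ascoli, a subsequence converges in $\cC^2(\overline{\Omega})$ to a limit $w_\infty\in\cC^{2,\alpha}(\overline{\Omega})$ which solves the homogeneous problem and therefore vanishes by the uniqueness argument above; then the estimate displayed before gives $\|w_n\|_{\cC^{2,\alpha}}\to 0$, contradicting $\|w_n\|_{\cC^{2,\alpha}}=1$. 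Combining the resulting bound on $w$ with the extension estimate on $G$ yields \eqref{EQ:Helmholtz Estimate}. The main obstacle is not conceptual but rather bookkeeping: justifying that the Fredholm alternative and the Schauder estimates transfer cleanly to the complex-valued setting with Hölder coefficients, which is handled by separating real and imaginary parts of $u$ to obtain an elliptic $2\times 2$ real system to which the standard real-valued theory applies.
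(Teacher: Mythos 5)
Your proposal is correct, and its skeleton (energy-method uniqueness from $\Im q>0$, Fredholm alternative for existence, Schauder estimate with the zeroth-order term removed via uniqueness) coincides with the paper's. The difference is in what gets proved versus cited: the paper establishes uniqueness by exactly your integration-by-parts argument and then outsources everything else to Agmon--Douglis--Nirenberg --- existence comes from their Fredholm alternative in the H\"older category (their Theorem 12.7, which handles complex coefficients directly), and the estimate $\|u\|_{\cC^{2,\alpha}(\overline{\Omega})} \le C(\|f\|_{\cC^{0,\alpha}(\overline{\Omega})} + \|g\|_{\cC^{2,\alpha}(\partial\Omega)} + \|u\|_{\cC^{0}(\overline{\Omega})})$ with the last term dropped by a remark in that same reference. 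You instead reduce to homogeneous boundary data, build the weak solution in $H^1_0(\Omega)$ by a compact-perturbation Fredholm argument, bootstrap through $W^{2,p}$ to reach $\cC^{2,\alpha}$, and absorb the $\cC^0$ term by a compactness--contradiction argument. Your route is more self-contained and makes transparent exactly where $\Im q>0$ and uniqueness enter the quantitative estimate, at the cost of extra machinery (trace extension, $L^p$ theory, the H\"older interpolation inequality needed to reduce the $\|qw\|_{\cC^{0,\alpha}}$ term to $\|w\|_{\cC^0}$, and the splitting into a real $2\times 2$ system); the paper's is shorter because ADN already packages the complex-coefficient Fredholm theory and the removal of the lower-order term. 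Both are complete proofs.
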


\begin{proof} The first task is to employ an energy method to show that \eqref{EQ:Helmholtz Scalar General} has at most one solution $u\in \cC^{2,\alpha}(\overline{\Omega};\bbC)$. To this end, suppose that $u\in \cC^{2,\alpha}(\overline{\Omega};\bbC)$ solves the homogeneous problem 
\begin{equation}
	\begin{array}{rcll}
		\Delta u + qu &= &0, & \mbox{in}\ \Omega\\
		u & = & 0, & \mbox{on}\ \partial\Omega\,.
	\end{array}
\end{equation}
Multiplying both sides of the PDE by $u^*$ and integrating over $\Omega$ results in 
\begin{align*}
\int_\Omega -|\nabla u|^2 + q|u|^2\, d\bx = 0\, ,
\end{align*}
whereupon taking imaginary parts yields $\int_\Omega \Im q |u|^2\, d\bx = 0$. The assumption $\Im q > 0$ then leads to $u\equiv 0$, as desired. This completes the proof of uniqueness.

Now that we have shown uniqueness, the existence of a solution $u\in \cC^{2,\alpha}(\overline{\Omega};\bbC)$ to the elliptic problem \eqref{EQ:Helmholtz Scalar General} follows from the Fredholm alternative: see \cite[Theorem 12.7]{AgDoNi-CPAM59} (which applies to elliptic operators with not only real-valued but also complex-valued coefficients).

Finally, from \cite[Theorem 7.3]{AgDoNi-CPAM59} we have the Schauder estimate 
\begin{align*}
    \|u\|_{\cC^{2,\alpha}(\overline{\Omega})} \le C(\|f\|_{\cC^{0,\alpha}(\overline{\Omega})} + \|g\|_{\cC^{2,\alpha}(\partial\Omega)}+\|u\|_{\cC^0(\overline{\Omega})})\,.
\end{align*}
On account of the problem having a \emph{unique} solution in $\cC^{2,\alpha}(\overline{\Omega};\bbC)$, the last term $\|u\|_{\cC^0(\overline{\Omega})}$ may be dropped (see Remark 2 following \cite[Theorem 7.3]{AgDoNi-CPAM59}) to arrive at the desired \eqref{EQ:Helmholtz Estimate}. 
\end{proof}

\begin{proof}[Proof of Theorem~\ref{THM:Well-Posedness}] The proof is a standard argument based on the Banach fixed point theorem. 

Before proceeding, we establish some notation. Let us define $q_1 := k^2(1+\eta) + ik\sigma$ and $q_2 := (2k)^2(1+\eta) + i2k\sigma$ for the sake of brevity of notation. If $X$ and $Y$ are two metric spaces, we shall equip the Cartesian product $X\times Y$ with any of the standard metrics, say the metric $d_{X\times Y}((x_1,y_1),(x_2,y_2)) := d_X(x_1,x_2) + d_Y(y_1,y_2)$. If $X$ and $Y$ are both complete, then so is $X\times Y$. Finally, for $\delta>0$ define the complete metric space
\begin{align*}
    \cX_\delta := \{f\in \cC^{2,\alpha}(\overline{\Omega}): \|f\|_{\cC^{2,\alpha}(\overline{\Omega})}\le \delta\}\,.
\end{align*}
To start, let $\eps>0$, $\delta>0$ and let $g,h\in \cC^{2,\alpha}(\partial\Omega;\bbC)$ with $\|g\|_{\cC^{2,\alpha}(\partial\Omega)}<\eps$ and $\|h\|_{\cC^{2,\alpha}(\partial\Omega)}<\eps$. We shall determine how small $\eps$ and $\delta$ need to be later.

In order to formulate the problem in terms of fixed points, define the operator $L^{-1}: \cC^{0,\alpha}(\overline{\Omega}) \times \cC^{0,\alpha}(\overline{\Omega}) \rightarrow \cC^{2,\alpha}(\overline{\Omega}) \times \cC^{2,\alpha}(\overline{\Omega})$ by setting $L^{-1}(f_1,f_2)$ as the unique solution $(U,V)\in \cC^{2,\alpha}(\overline{\Omega})\times \cC^{2,\alpha}(\overline{\Omega})$ to the problem 
\begin{equation*}
	\begin{array}{rcll}
  	\Delta U+ q_1U &=& f_1,
  	&\mbox{in}\ \ \Omega\\
  	\Delta V+ q_2V &=& f_2,
  	&\mbox{in}\ \ \Omega\\
        U = g,  && V = h, &\mbox{on}\ \ \partial\Omega\,.
	\end{array}
\end{equation*}
Then a solution $(u,v)\in \cX_\delta\times \cX_\delta$ to \eqref{EQ:SHG} is precisely the same as a fixed point in $\cX_\delta\times \cX_\delta$ of the operator $T$ defined by 
\begin{align*}
    T(\phi_1,\phi_2) := L^{-1}(-k^2\gamma \phi_1^* \phi_2,\ -(2k)^2\gamma \phi_1^2)\,.
\end{align*}
It remains to show that for sufficiently small $\epsilon>0$ and $\delta>0$,
\begin{enumerate}[label=(\roman*)]
    \item $T$ is a well-defined operator from $\cX_\delta\times \cX_\delta$ to itself, and
    \item $T$ is a contraction on $\cX_\delta\times \cX_\delta$.
\end{enumerate}
In order to perform the next several calculations, recall that $\cC^{0,\alpha}(\overline{\Omega})$ is a Banach algebra, meaning that 
\begin{align*}
    \|fg\|_{\cC^{0,\alpha}(\overline{\Omega})}\le \|f\|_{\cC^{0,\alpha}(\overline{\Omega})}\|g\|_{\cC^{0,\alpha}(\overline{\Omega})}
\end{align*}
for all $f,g\in \cC^{0,\alpha}(\overline{\Omega})$.

\paragraph{Proof of (i).} For all $(\phi_1,\phi_2)\in \cX_\delta\times \cX_\delta$, we compute 
\begin{align*}
    \|-k^2\gamma \phi_1^* \phi_2\|_{\cC^{0,\alpha}(\overline{\Omega})}
    \le C\|\phi_1\|_{\cC^{0,\alpha}(\overline{\Omega})}\|\phi_2\|_{\cC^{0,\alpha}(\overline{\Omega})} 
    \le C\|\phi_1\|_{\cC^{2,\alpha}(\overline{\Omega})}\|\phi_2\|_{\cC^{2,\alpha}(\overline{\Omega})} 
    \le C\delta^2
\end{align*}
and similarly 
\begin{align*}
    \|-(2k)^2\gamma \phi_1^2\|_{\cC^{0,\alpha}(\overline{\Omega})}
    \le C\|\phi_1\|_{\cC^{0,\alpha}(\overline{\Omega})} ^2 
    \le C\|\phi_1\|_{\cC^{2,\alpha}(\overline{\Omega})} ^2 
    \le C\delta^2
\end{align*}
Let $(U,V) := T(\phi_1,\phi_2)$. Combining the above estimates with the Schauder estimate \eqref{EQ:Helmholtz Estimate} for the Helmholtz equation then leads to
\begin{align*}
    \|U\|_{\cC^{2,\alpha}(\overline{\Omega})} 
    \le C(\|-k^2\gamma \phi_1^* \phi_2\|_{\cC^{0,\alpha}(\overline{\Omega})} + \|g\|_{\cC^{2,\alpha}(\partial\Omega)}) 
    \le C(\delta^2 + \eps)
\end{align*}
and 
\begin{align*}
    \|V\|_{\cC^{2,\alpha}(\overline{\Omega})} 
    \le C(\|-(2k)^2\gamma \phi_1^2\|_{\cC^{0,\alpha}(\overline{\Omega})} + \|h\|_{\cC^{2,\alpha}(\partial\Omega)}) 
    \le C(\delta^2 + \eps).
\end{align*}
We can force these quantities to be less than $\delta$ by choosing $\eps$ and $\delta$ sufficiently small. This implies that $T(\phi_1,\phi_2)\in \cX_\delta\times \cX_\delta$ as desired. 

\paragraph{Proof of (ii).} Let $(\phi_1,\phi_2), (\phi_1',\phi_2')\in \cX_\delta\times \cX_\delta$. Then we compute 
\begin{align*}
    \|-k^2\gamma \phi_1^*\phi_2 - (-k^2\gamma (\phi_1')^*\phi_2')\|_{\cC^{0,\alpha}(\overline{\Omega})} &\le C\|\phi_1^*\phi_2 - (\phi_1')^*\phi_2'\|_{\cC^{0,\alpha}(\overline{\Omega})}\\
    &= C\|\phi_1^*(\phi_2-\phi_2') + (\phi_1^*-(\phi_1')^*)\phi_2'\|_{\cC^{0,\alpha}(\overline{\Omega})}\\
    &\le C(\|\phi_1\|_{\cC^{0,\alpha}(\overline{\Omega})}\|\phi_2-\phi_2'\|_{\cC^{0,\alpha}(\overline{\Omega})} + \|\phi_1-\phi_1'\|_{\cC^{0,\alpha}(\overline{\Omega})}\|\phi_2'\|_{\cC^{0,\alpha}(\overline{\Omega})})\\
    &\le C(\|\phi_1\|_{\cC^{2,\alpha}(\overline{\Omega})}\|\phi_2-\phi_2'\|_{\cC^{2,\alpha}(\overline{\Omega})} + \|\phi_1-\phi_1'\|_{\cC^{2,\alpha}(\overline{\Omega})}\|\phi_2'\|_{\cC^{2,\alpha}(\overline{\Omega})})\\
    &\le C\delta \|(\phi_1,\phi_2) - (\phi_1',\phi_2')\|_{\cC^{2,\alpha}(\overline{\Omega})\times \cC^{2,\alpha}(\overline{\Omega})}
\end{align*}
and similarly 
\begin{align*}
    \|-(2k)^2\gamma\phi_1^2 - (-(2k)^2\gamma(\phi_1')^2)\|_{\cC^{0,\alpha}(\overline{\Omega})} &\le C\|\phi_1^2-(\phi_1')^2\|_{\cC^{0,\alpha}(\overline{\Omega})}\\
    &\le C\|\phi_1-\phi_1'\|_{\cC^{0,\alpha}(\overline{\Omega})}\|\phi_1+\phi_1'\|_{\cC^{0,\alpha}(\overline{\Omega})}\\
    &\le C\delta\|\phi_1-\phi_1'\|_{\cC^{2,\alpha}(\overline{\Omega})}\\
    &\le C\delta \|(\phi_1,\phi_2) - (\phi_1',\phi_2')\|_{\cC^{2,\alpha}(\overline{\Omega})\times \cC^{2,\alpha}(\overline{\Omega})}.
\end{align*}
Let $(U,V) := T(\phi_1,\phi_2)$ and $(U',V') := T(\phi_1',\phi_2')$. Then $U-U'$ and $V-V'$ satisfy 
\begin{equation*}
	\begin{array}{rcll}
  	\Delta (U-U')+ q_1(U-U') &=& -k^2\gamma \phi_1^*\phi_2 - (-k^2\gamma (\phi_1')^*\phi_2'),
  	&\mbox{in}\ \ \Omega\\
  	\Delta (V-V')+ q_2(V-V') &=& -(2k)^2\gamma\phi_1^2 - (-(2k)^2\gamma(\phi_1')^2),
  	&\mbox{in}\ \ \Omega\\
        U-U' = 0, \qquad V-V' &=& 0, &\mbox{on}\ \ \partial\Omega\,.
	\end{array}
\end{equation*}
Combining the above estimates with the Schauder estimate \eqref{EQ:Helmholtz Estimate} for the Helmholtz equation then leads to 
\begin{align*}
    \|U-U'\|_{\cC^{2,\alpha}(\overline{\Omega})} 
    \le C\|-k^2\gamma \phi_1^*\phi_2 - (-k^2\gamma (\phi_1')^*\phi_2')\|_{\cC^{0,\alpha}(\overline{\Omega})}
    \le C\delta \|(\phi_1,\phi_2) - (\phi_1',\phi_2')\|_{\cC^{2,\alpha}(\overline{\Omega})\times \cC^{2,\alpha}(\overline{\Omega})}
\end{align*}
and
\begin{align*}
    \|V-V'\|_{\cC^{2,\alpha}(\overline{\Omega})} 
    \le C\|-(2k)^2\gamma\phi_1^2 - (-(2k)^2\gamma(\phi_1')^2)\|_{\cC^{0,\alpha}(\overline{\Omega})}
    \le C\delta \|(\phi_1,\phi_2) - (\phi_1',\phi_2')\|_{\cC^{2,\alpha}(\overline{\Omega})\times \cC^{2,\alpha}(\overline{\Omega})}.
\end{align*}
We conclude that 
\begin{align*}
    \|T(\phi_1,\phi_2) - T(\phi_1',\phi_2')\|_{\cC^{2,\alpha}(\overline{\Omega})\times \cC^{2,\alpha}(\overline{\Omega})} &= \|U-U'\|_{\cC^{2,\alpha}(\overline{\Omega})} + \|V-V'\|_{\cC^{2,\alpha}(\overline{\Omega})}\\
    &\le C\delta \|(\phi_1,\phi_2) - (\phi_1',\phi_2')\|_{\cC^{2,\alpha}(\overline{\Omega})\times \cC^{2,\alpha}(\overline{\Omega})}.
\end{align*}
The factor $C\delta$ can be made strictly less than $1$ when $\delta$ is sufficiently small. This makes $T$ into a contraction, as desired. 

Having proved that $T$ is a contraction on the complete metric space $\cX_\delta\times \cX_\delta$, the Banach fixed point theorem guarantees that there exists a unique $(u,v)\in \cX_\delta\times \cX_\delta$ such that $T(u,v) = (u,v)$. As discussed earlier, this is equivalent to saying that there exists a unique $(u,v)\in \cX_\delta\times \cX_\delta$ satisfying the boundary value problem \eqref{EQ:SHG}. This completes the proof of the first part of the theorem.

\paragraph{Proof of the estimates \eqref{EQ:SHG Estimates}.} We perform a calculation similar to those in the proof of (i) to obtain 
\begin{align*}
    \|u\|_{\cC^{2,\alpha}(\overline{\Omega})} 
    &\le C(\|-k^2\gamma u^* v\|_{\cC^{0,\alpha}(\overline{\Omega})} + \|g\|_{\cC^{2,\alpha}(\partial\Omega)}) 
    \le C(\|u\|_{\cC^{0,\alpha}(\overline{\Omega})}\|v\|_{\cC^{0,\alpha}(\overline{\Omega})} + \|g\|_{\cC^{2,\alpha}(\partial\Omega)})\\
    &\le C(\|u\|_{\cC^{2,\alpha}(\overline{\Omega})}\delta + \|g\|_{\cC^{2,\alpha}(\partial\Omega)})
\end{align*}
When $\delta$ is sufficiently small, this implies that 
\begin{align*}
    \|u\|_{\cC^{2,\alpha}(\overline{\Omega})} \le C\|g\|_{\cC^{2,\alpha}(\partial\Omega)}\,,
\end{align*}
as desired. To get the estimate for $v$, we calculate 
\begin{align*}
    \|v\|_{\cC^{2,\alpha}(\overline{\Omega})} 
    &\le C(\|-(2k)^2 \gamma u^2\|_{\cC^{0,\alpha}(\overline{\Omega})} + \|h\|_{\cC^{2,\alpha}(\partial\Omega)}) 
    \le C(\|u\|_{\cC^{0,\alpha}(\partial\Omega)}^2 + \|h\|_{\cC^{2,\alpha}(\partial\Omega)})\\
    &\le C(\delta \|u\|_{\cC^{2,\alpha}(\overline{\Omega})} + \|h\|_{\cC^{2,\alpha}(\partial\Omega)}) 
    \le C(\delta\|g\|_{\cC^{2,\alpha}(\partial\Omega)} + \|h\|_{\cC^{2,\alpha}(\partial\Omega)})\\
    &\le C(\|g\|_{\cC^{2,\alpha}(\partial\Omega)} + \|h\|_{\cC^{2,\alpha}(\partial\Omega)})\,,
\end{align*}
as desired. The proof is complete.
\end{proof}

\section{Differentiability result for linearization}
\label{APP:Derivation of the linearization}

We provide here the mathematical justification of the linearization process we outlined in Section~\ref{SUBSEC:Linearization}. More precisely, we prove Theorem~\ref{THM:Linearization} by showing that $(u_\eps, v_\eps)$ and therefore $H_\eps$ are differentiable with respect to $\eps$.

\begin{proof}[Proof of Theorem~\ref{THM:Linearization}] Let us define $q_1 := k^2(1+\eta) + ik\sigma$ and $q_2 := (2k)^2(1+\eta) + i2k\sigma$ to ease notation.

To start the proof, let $\wt{u}^{(1)}$, $\wt{v}^{(1)}$, $\wt{u}^{(2)}$, $\wt{v}^{(2)}$ denote the unique functions in $\cC^{2,\alpha}(\overline{\Omega};\bbC)$ which satisfy equations \eqref{EQ:Order eps} and \eqref{EQ:Order eps2}. That is, 
\begin{equation*}
	\begin{array}{rcll}
  	\Delta \wt{u}^{(1)}+ q_1\wt{u}^{(1)} &=& 0,
  	&\mbox{in}\ \ \Omega\\
  	\Delta \wt{v}^{(1)}+ q_2\wt{v}^{(1)} &=& 0,
  	&\mbox{in}\ \ \Omega\\
	    \wt{u}^{(1)} = g_1,  && \wt{v}^{(1)} = h_1, &\mbox{on}\ \ \partial\Omega
	\end{array}
\end{equation*}
and 
\begin{equation*}
	\begin{array}{rcll}
  	\Delta \wt{u}^{(2)}+ q_1\wt{u}^{(2)} &=& -2k^2\gamma \wt{u}^{(1)*}\wt{v}^{(1)},
  	&\mbox{in}\ \ \Omega\\
  	\Delta \wt{v}^{(2)}+ q_2\wt{v}^{(2)} &=& -2(2k)^2\gamma (\wt{u}^{(1)})^2,
  	&\mbox{in}\ \ \Omega\\
        \wt{u}^{(2)}= g_2, && \wt{v}^{(2)} = h_2,&\mbox{on}\ \ \partial\Omega\,.
	\end{array}
\end{equation*}
(Note that the existence and uniqueness of these functions is guaranteed by Theorem \ref{THM:Helmholtz Well-Posedness}.) Define now the ``remainder" terms 
\begin{equation}\label{EQ:Remainders Definition}
\begin{aligned}
    \mu_\eps &:= u_\eps - \eps\wt{u}^{(1)} - \frac12\eps^2\wt{u}^{(2)},\\
    \nu_\eps &:= v_\eps - \eps\wt{v}^{(1)} - \frac12\eps^2\wt{v}^{(2)}.
\end{aligned}
\end{equation}
We wish to show that $\mu_\eps$ and $\nu_\eps$ are in a certain sense ``$o(\eps^2)$" as $\eps\rightarrow 0$. This will be accomplished in two rounds of estimates on $\mu_\eps$, $\nu_\eps$, $u_\eps$ and $v_\eps$. 

\paragraph{Round 1 estimates.} We begin by using the linearity of the operators $\Delta + q_1$ and $\Delta + q_2$ to find that 
\begin{equation}\label{EQ:Remainders}
	\begin{array}{rcll}
  	\Delta \mu_\eps + q_1\mu_\eps &=& -k^2\gamma [u_\eps^*v_\eps - \eps^2 \wt{u}^{(1)*}\wt{v}^{(1)}],
  	&\mbox{in}\ \ \Omega\\
  	\Delta \nu_\eps + q_2\nu_\eps &=& -(2k)^2\gamma [u_\eps^2 - \eps^2(\wt{u}^{(1)})^2],
  	&\mbox{in}\ \ \Omega\\
   \mu_\eps = 0, && \nu_\eps = 0,&\mbox{on}\ \ \partial\Omega\,.
	\end{array}
\end{equation}
To obtain control on the size of the right hand sides, we utilize the well-posedness result Theorem \ref{THM:Well-Posedness} to see that 
\begin{align*}
    \|u_\eps\|_{\cC^{0,\alpha}(\overline{\Omega})}\le \|u_\eps\|_{\cC^{2,\alpha}(\overline{\Omega})} \le C\left(\left\|\eps g_1 + \frac12\eps^2 g_2\right\|_{\cC^{2,\alpha}(\partial\Omega)} + \left\|\eps h_1 + \frac12\eps^2 h_2\right\|_{\cC^{2,\alpha}(\partial\Omega)}\right) \le C \eps,
\end{align*}
and similarly for $v_\eps$. Here, $C=C(\alpha,\Omega,\eta,\sigma,\gamma,g_1,g_2)$ is a constant not depending on $\eps$. We can write these bounds succinctly as 
\begin{align}\label{EQ:Solution Bounds}
    u_\eps = \cO_{\cC^{0,\alpha}(\overline{\Omega})}(\eps),\qquad v_\eps = \cO_{\cC^{0,\alpha}(\overline{\Omega})}(\eps).
\end{align}
In order to perform the next several calculations, recall that $\cC^{0,\alpha}(\overline{\Omega})$ is a Banach algebra, meaning that 
\begin{align*}
    \|fg\|_{\cC^{0,\alpha}(\overline{\Omega})}\le \|f\|_{\cC^{0,\alpha}(\overline{\Omega})}\|g\|_{\cC^{0,\alpha}(\overline{\Omega})}
\end{align*}
for all $f,g\in \cC^{0,\alpha}(\overline{\Omega})$. With the help of this property, we plug the bounds \eqref{EQ:Solution Bounds} into the right hand sides of \eqref{EQ:Remainders} to discover that
\begin{align}\label{EQ:RHS Bounds}
    -k^2\gamma [u_\eps^*v_\eps - \eps^2 \wt{u}^{(1)*}\wt{v}^{(1)}] = \cO_{\cC^{0,\alpha}(\overline{\Omega})}(\eps^2),\qquad -(2k)^2\gamma [u_\eps^2 - \eps^2(\wt{u}^{(1)})^2] = \cO_{\cC^{0,\alpha}(\overline{\Omega})}(\eps^2).
\end{align}
The Schauder estimate \eqref{EQ:Helmholtz Estimate} for the Helmholtz equation applied to \eqref{EQ:Remainders} then gives 
\begin{align}
    \mu_\eps = \cO_{\cC^{2,\alpha}(\overline{\Omega})}(\eps^2),\qquad \nu_\eps = \cO_{\cC^{2,\alpha}(\overline{\Omega})}(\eps^2),
\end{align}
and in particular
\begin{align}\label{EQ:Remainders Bounds}
    \mu_\eps = \cO_{\cC^{0,\alpha}(\overline{\Omega})}(\eps^2),\qquad \nu_\eps = \cO_{\cC^{0,\alpha}(\overline{\Omega})}(\eps^2).
\end{align}

\paragraph{Round 2 estimates.} Using the estimates from Round 1 and recalling the definition \eqref{EQ:Remainders Definition} of the remainder terms $\mu_\eps$ and $\nu_\eps$, we can now refine the bounds \eqref{EQ:RHS Bounds} on the right hand sides of \eqref{EQ:Remainders}:
\begin{align*}
    &-k^2\gamma [u_\eps^*v_\eps - \eps^2 \wt{u}^{(1)*}\wt{v}^{(1)}]\\
    &= -k^2\gamma\left[\left(\eps\wt{u}^{(1)*} + \frac12\eps^2\wt{u}^{(2)*}+\cO_{\cC^{0,\alpha}(\overline{\Omega})}(\eps^2)\right)\left(\eps \wt{v}^{(1)} + \frac12\eps^2\wt{v}^{(2)}+\cO_{\cC^{0,\alpha}(\overline{\Omega})}(\eps^2)\right) - \eps^2 \wt{u}^{(1)*}\wt{v}^{(1)}\right]\\
    &= \cO_{\cC^{0,\alpha}(\overline{\Omega})}(\eps^3)   
\end{align*}
and 
\begin{align*}
    -(2k)^2\gamma [u_\eps^2 - \eps^2(\wt{u}^{(1)})^2] &= -(2k)^2\gamma \left[\left(\eps\wt{u}^{(1)} + \frac12\eps^2\wt{u}^{(2)} + \cO_{\cC^{0,\alpha}(\overline{\Omega})}(\eps^2)\right)^2 - \eps^2(\wt{u}^{(1)})^2\right] = \cO_{\cC^{0,\alpha}(\overline{\Omega})}(\eps^3).
\end{align*}
With these improved bounds in hand, we again apply the Schauder estimate \eqref{EQ:Helmholtz Estimate} to \eqref{EQ:Remainders} to obtain the following bounds for the remainder terms $\mu_\eps$ and $\nu_\eps$:
\begin{align*}
    \mu_\eps = \cO_{\cC^{0,\alpha}(\overline{\Omega})}(\eps^3),\qquad \nu_\eps = \cO_{\cC^{0,\alpha}(\overline{\Omega})}(\eps^3).
\end{align*}
Note that this is a refinement over the previous remainder bounds \eqref{EQ:Remainders Bounds}. This concludes the Round 2 estimates.

Therefore from the definition \eqref{EQ:Remainders Definition} of $\mu_\eps$ and $\nu_\eps$ we conclude that 
\begin{align*}
    u_\eps &= \eps\wt{u}^{(1)} + \frac12\eps^2\wt{u}^{(2)} + \cO_{\cC^{0,\alpha}(\overline{\Omega})}(\eps^3),\\
    v_\eps &= \eps\wt{v}^{(1)} + \frac12\eps^2\wt{v}^{(2)} + \cO_{\cC^{0,\alpha}(\overline{\Omega})}(\eps^3),
\end{align*}
and in particular for each $\bx\in\overline{\Omega}$ we have the pointwise estimates 
\begin{align*}
    u_\eps(\bx) &= \eps\wt{u}^{(1)}(\bx) + \frac12\eps^2\wt{u}^{(2)}(\bx) + \cO(\eps^3),\\
    v_\eps(\bx) &= \eps\wt{v}^{(1)}(\bx) + \frac12\eps^2\wt{v}^{(2)}(\bx) + \cO(\eps^3),
\end{align*}
as $\eps\rightarrow 0$.

\paragraph{Asymptotic expansion of $H_{\eps}$.} To finish the proof, for each $\bx\in\overline{\Omega}$ we compute:
\begin{align*}
    H_\eps(\bx) &= \Gamma(\bx)\sigma(\bx)[u_\eps(\bx)u^*_\eps(\bx) + v_\eps(\bx)v^*_\eps(\bx)]\\
    &=\Gamma(\bx)\sigma(\bx)\Bigg[\left(\eps\wt{u}^{(1)}(\bx) + \frac12\eps^2\wt{u}^{(2)}(\bx) + \cO(\eps^3)\right)\left(\eps\wt{u}^{(1)*}(\bx) + \frac12\eps^2\wt{u}^{(2)*}(\bx) + \cO(\eps^3)\right)\\
    &\qquad + \left(\eps\wt{v}^{(1)}(\bx) + \frac12\eps^2\wt{v}^{(2)}(\bx) + \cO(\eps^3)\right)\left(\eps\wt{v}^{(1)*}(\bx) + \frac12\eps^2\wt{v}^{(2)*}(\bx) + \cO(\eps^3)\right) \Bigg]\\
    &= \eps H^{(1)}(\bx) + \frac12\eps^2 H^{(2)}(\bx) + \frac16\eps^3 H^{(3)}(\bx) + \cO(\eps^4),
\end{align*}
where 
\begin{align*}
    H^{(1)} &:= 0,\\
    H^{(2)} &:=2\Gamma \sigma (\wt{u}^{(1)*} \wt{u}^{(1)} + \wt{v}^{(1)*} \wt{v}^{(1)}),\\
    H^{(3)} &:= 3\Gamma\sigma\left(\wt{u}^{(1)*}\wt{u}^{(2)}+\wt{u}^{(1)}\wt{u}^{(2)*}+\wt{v}^{(1)*}\wt{v}^{(2)}+\wt{v}^{(1)}\wt{v}^{(2)*}\right).
\end{align*}
These are precisely the equations \eqref{EQ:Data Order eps}, \eqref{EQ:Data Order eps2}, and \eqref{EQ:Data Order eps3}, respectively. This completes the proof.
\end{proof}

\section{Elliptic system theory}
\label{APP:Elliptic system theory}
For the sake of completeness, in this appendix we review the elliptic system theory that appears in the proof of Theorem~\ref{THM:Reconstruction of gamma}. We mostly follow the presentation of~\cite{Bal-CM13}.

Let $M\ge N$ be positive integers and consider the following system of $M$ partial differential equations in $N$ unknown functions $v_1,\dots, v_N$ defined on an open set $\Omega$:
\begin{align*}
    \cA(\bx,D) v = \cS\quad\mbox{in}\ \Omega.
\end{align*}
Here $v=(v_1,\dots, v_N)$, $D=(\partial_{x_1},\dots,\partial_{x_n})$, and $\cA(\bx,D)$ is an $M\times N$ matrix linear partial differential operator. That is, for each $1\le i\le M$ and $1\le j\le N$, the entry $\cA_{ij}(\bx,D)$ is a linear partial differential operator, and the above matrix equation means that 
\begin{align*}
    \sum_{j=1}^N \cA_{ij}(\bx,D)v_j = \cS_i,\qquad 1\le i\le M.
\end{align*}
To each row $1\le i\le M$ let us now associate an integer $s_i$, and to each column $1\le j\le N$ let us associate an integer $t_j$. Let us choose the numbers in such a way that the order of the partial differential operator $\cA_{ij}(\bx,D)$ is no greater than $s_i+t_j$. (If $s_i+t_j<0$, then we require that $\cA_{ij}(\bx,D)=0$.)

The \emph{principal part} $\cA_0(\bx,D)$ of $\cA(\bx,D)$ is defined as the $M\times N$ matrix linear partial differential operator such that $(\cA_0)_{ij}(\bx,D)$ consists of the terms in $\cA_{ij}(\bx,D)$ of order exactly $s_i+t_j$.

\begin{definition} The matrix partial differential operator $\cA$ is called \emph{elliptic} if such Douglis-Nirenberg numbers $(s_i)_{1\le i\le M}$ and $(t_j)_{1\le j\le N}$ exist, and the matrix $\cA_0(\bx,\bxi)$ has full rank $N$ for each $\bx\in\Omega$ and $\bxi\in \bbS^{n-1}$. (The matrix $\cA_0(\bx,\bxi)$ is called the \emph{symbol} of the operator $\cA_0$.)
\end{definition}

We now summarize the parts of \cite[Section 3]{Bal-CM13} that are relevant for the proof of Theorem~\ref{THM:Reconstruction of gamma}. Assume from now on that $\cA$ is an elliptic operator with continuous coefficients and Douglis-Nirenberg numbers $(s_i)_{1\le i\le M}$ and $(t_j)_{1\le j\le N}$ such that 
\begin{equation}\label{EQ:Douglis-Nirenberg Numbers Condition}
\begin{array}{rcll}
    s_i &=& 0, &1\le i\le M,\\
    t_j &=& \tau, &1\le j\le N,
\end{array}
\end{equation}
for some positive integer $\tau$. In the following, we will consider the Dirichlet boundary value problem 
\begin{equation}\label{EQ:Dirichlet Problem}
\begin{array}{rcll}
    \cA(\bx,D)v &=& \cS &\mbox{in}\ \Omega,\\
    \left(\pdr{}{\nu}\right)^q v_j &=& \phi_{qj} &\mbox{on}\ \partial\Omega,\quad 0\le q\le \tau-1, \quad 1\le j\le N.
\end{array}
\end{equation}
For this boundary value problem, we have the following estimate.

\begin{theorem}{\cite[p. 13]{Bal-CM13}}\label{THM:Elliptic Dirichlet Stability Estimate} Let $p>1$ and $\ell \ge 0$. Assume that $\cS_i\in W^{\ell,p}(\Omega)$ for all $1\le i\le M$ and $\phi_{qj}\in W^{\ell+\tau-q-\frac{1}{p},p}(\partial\Omega)$ for all $0\le q\le \tau-1$ and $1\le j\le N$. Also, assume that all the coefficients in $\cA$ are in $\cC^{\ell}(\overline{\Omega})$ \cite[Theorem 1.1]{Solonnikov-JSM73}. Then the following elliptic regularity  estimate holds for the boundary value problem \eqref{EQ:Dirichlet Problem}:
\begin{align}\label{EQ:Elliptic Dirichlet Stability Estimate}
    \sum_{j=1}^{N} \|v_j\|_{W^{\ell+\tau,p}(\Omega)} &\le C\left(\sum_{i=1}^M \|\cS_i\|_{W^{\ell,p}(\Omega)} + \sum_{q,j}\|\phi_{qj}\|_{W^{\ell+\tau-q-\frac{1}{p},p}(\partial\Omega)}\right) + C_2\sum_{j=1}^N \|v_j\|_{L^p(\Omega)}
\end{align}
\end{theorem}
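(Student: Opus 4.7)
The plan is to follow the classical Agmon--Douglis--Nirenberg scheme for elliptic systems, specialized to the Douglis--Nirenberg structure \eqref{EQ:Douglis-Nirenberg Numbers Condition} in which all row weights vanish and all column weights equal $\tau$. In this setting each entry $\cA_{ij}(\bx, D)$ has order at most $\tau$, and the boundary data prescribes the traces of $v_j$ and its normal derivatives up to order $\tau - 1$. The first preliminary task will be to verify the Lopatinskii--Shapiro (complementing) condition: at each $\bx_0 \in \partial\Omega$, with $\nu$ the inward normal and $\bxi$ any nonzero cotangent vector, the half-line ODE system $\cA_0(\bx_0, \bxi + s\nu)\, w(s) = 0$ on $\{s \ge 0\}$ with $w(s) \to 0$ as $s \to \infty$ should admit only the trivial solution under the homogeneous Dirichlet data $(d/ds)^q w_j(0) = 0$ for $0 \le q \le \tau - 1$ and $1 \le j \le N$. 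Ellipticity of $\cA_0$ confines all nontrivial decaying solutions to an explicit finite-dimensional space, and one verifies directly that the Dirichlet trace map is injective on it.

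Next I would localize via a smooth partition of unity $\{\chi_i\}$ subordinate to a covering of $\overline{\Omega}$ by small balls that are either contained in $\Omega$ or centered at boundary points; for each boundary ball I introduce a local chart $\Phi_i$ flattening $\partial\Omega$ to a hyperplane. For each piece $\chi_i v$ the equation becomes $\cA(\bx, D)(\chi_i v) = \chi_i \cS + [\cA, \chi_i] v$, where the commutator involves derivatives of $v$ only up to order $\tau - 1$ and can therefore be absorbed at the end of the argument via Ehrling's lemma interpolating $W^{\ell + \tau - 1, p}$ between $W^{\ell + \tau, p}$ and $L^p$.

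The core analytic estimates then take place on two model problems with constant (frozen) coefficients. For an interior ball, freezing coefficients at the center produces a translation-invariant system on all of $\bbR^n$ that is solved by Fourier transform; the symbol of the inverse of $\cA_0(\bx_0, D)$ (obtained from the adjugate divided by the determinant, itself elliptic of order $N\tau$) is a matrix-valued homogeneous multiplier of order $-\tau$, and Mikhlin's theorem gives an $L^p$-bounded solution operator mapping $W^{\ell, p}$ into $W^{\ell + \tau, p}$. For a boundary ball, after flattening I apply the tangential Fourier transform to reduce to an ODE system in the normal variable; the Lopatinskii--Shapiro condition guarantees that the Dirichlet trace data uniquely determine the decaying solution, and the resulting Poisson-type representation together with the Agmon--Douglis--Nirenberg kernel estimates produces the desired $W^{\ell + \tau, p}$ bound in terms of the $W^{\ell + \tau - q - 1/p, p}$ norms of the $\phi_{qj}$.

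Finally I would pass from frozen to variable coefficients by writing $\cA(\bx, D) = \cA(\bx_0, D) + [\cA(\bx, D) - \cA(\bx_0, D)]$. On a sufficiently small ball the top-order perturbation has arbitrarily small $\cC^0$ norm (here continuity of the $\cC^\ell$ coefficients is used) and can be absorbed on the left, while the lower-order perturbation produces only $W^{\ell + \tau - 1, p}$ contributions that interpolate into $W^{\ell + \tau, p}$ plus the additive $L^p$ remainder. Summing over the finite cover and invoking Ehrling's lemma delivers the global estimate \eqref{EQ:Elliptic Dirichlet Stability Estimate}, with the extra term $C_2 \sum_j \|v_j\|_{L^p(\Omega)}$ arising precisely from the interpolation of the commutator and lower-order perturbation terms. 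I expect the main obstacle to be the half-space boundary estimate, since it relies on an explicit analysis of the Poisson kernels obtained from the Lopatinskii--Shapiro resolution of the tangential ODE system and the Calder\'on--Zygmund bounds on those kernels; everything else reduces to standard interior Fourier multiplier theory combined with bookkeeping of commutators.
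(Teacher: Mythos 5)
The paper offers no proof of this statement: it is quoted as a black box from \cite[p.~13]{Bal-CM13}, which in turn rests on Solonnikov's theory of overdetermined elliptic boundary-value problems \cite{Solonnikov-JSM73}. There is therefore no in-paper argument to compare against, and what you have written is an outline of the classical Agmon--Douglis--Nirenberg/Solonnikov proof. As an outline it assembles the right ingredients in the right order: the Lopatinskii--Shapiro verification (which, as you say, is essentially immediate for the full Dirichlet data $(\partial/\partial\nu)^q v_j$, $0\le q\le \tau-1$, since vanishing Cauchy data of order $\tau$ forces the decaying solution of the frozen half-line ODE system to vanish, the normal symbol $\cA_0(\bx_0,\nu)$ having full column rank by ellipticity), localization by a partition of unity, freezing of coefficients, interior multiplier estimates, half-space Poisson-kernel estimates, perturbative absorption of the frozen-coefficient error, and Ehrling interpolation producing the additive term $C_2\sum_j\|v_j\|_{L^p(\Omega)}$.

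One step does not survive as written. Your interior parametrix is ``the adjugate divided by the determinant'' of $\cA_0(\bx_0,\bxi)$, which presupposes a \emph{square} symbol, whereas the theorem is stated for redundant systems with $M\ge N$ and the whole point of the Douglis--Nirenberg framework invoked here is to accommodate that redundancy. For $M>N$ the symbol is an $M\times N$ matrix of full column rank $N$ and has no determinant; the standard repair is a left parametrix with symbol $\bigl(\cA_0^{*}\cA_0\bigr)^{-1}\cA_0^{*}$, where $\cA_0^{*}\cA_0$ is an $N\times N$ elliptic symbol homogeneous of degree $2\tau$ with determinant bounded away from zero on $|\bxi|=1$; the resulting multiplier is homogeneous of degree $-\tau$ and Mikhlin's theorem applies exactly as you intend. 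The same adjustment is needed in the tangentially Fourier-transformed half-space problem. With that correction your sketch matches the argument in the cited references; the remaining technical items (the exact trace exponents $W^{\ell+\tau-q-\frac1p,p}(\partial\Omega)$ and the Calder\'on--Zygmund bounds on the Poisson kernels) are precisely the parts the paper, reasonably, delegates to \cite{AgDoNi-CPAM59,Solonnikov-JSM73}.
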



The following result on uniqueness of solutions in sufficiently small domains is used in the proof of Theorem~\ref{THM:Reconstruction of gamma}.
\begin{theorem}{\cite[Theorem 3.5]{Bal-CM13}}\label{THM:Small Domain Uniqueness} Let $\bx_0\in \Omega$. Then there exists $\epsilon>0$ such that for every small domain $\Omega'\subset B(\bx_0,\epsilon)$, the only solution to the homogeneous Dirichlet boundary value problem
\begin{equation}
\begin{array}{rcll}
    \cA(\bx,D)v &=& 0 &\mbox{in}\ \Omega',\\
    \left(\pdr{}{\nu}\right)^q v_j &=& 0 &\mbox{on}\ \partial\Omega',\quad 0\le q\le \tau-1, \quad 1\le j\le N.
\end{array}
\end{equation}
in $\Omega'$ is the trivial solution $v=0$. In particular, \eqref{EQ:Elliptic Dirichlet Stability Estimate} holds with $C_2=0$.
\end{theorem}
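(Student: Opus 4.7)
The plan is to establish uniqueness on sufficiently small subdomains $\Omega'\subset B(\bx_0,\epsilon)$ by combining the $\ell=0$ case of the a priori estimate in Theorem~\ref{THM:Elliptic Dirichlet Stability Estimate} with an iterated Poincar\'e inequality that becomes favorable on small domains. The key observation is that, if a solution $v$ to the homogeneous problem exists, then the $L^p$ term on the right of \eqref{EQ:Elliptic Dirichlet Stability Estimate} can be reabsorbed into the left-hand side once $\mathrm{diam}(\Omega')$ is sufficiently small, forcing $v\equiv 0$. The ``in particular'' claim that one may then take $C_2=0$ in \eqref{EQ:Elliptic Dirichlet Stability Estimate} would follow from this uniqueness by a standard Fredholm-type compactness argument.

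To make the main step precise, let $v=(v_1,\ldots,v_N)$ solve the homogeneous problem on $\Omega'$. The vanishing of the normal derivatives $(\partial/\partial\nu)^q v_j$ on $\partial\Omega'$ for $0\le q\le \tau-1$, together with the smoothness of $\partial\Omega'$, implies that every partial derivative of $v_j$ of order at most $\tau-1$ has zero trace on $\partial\Omega'$: tangential derivatives vanish automatically from $v_j=0$ on $\partial\Omega'$, and normal and tangential components can be combined inductively in the remaining cases. Hence $v_j\in W^{\tau,p}_0(\Omega')$, and applying \eqref{EQ:Elliptic Dirichlet Stability Estimate} with $\ell=0$ and vanishing source and boundary data yields
\[
\sum_{j=1}^N \|v_j\|_{W^{\tau,p}(\Omega')} \le C_2\sum_{j=1}^N \|v_j\|_{L^p(\Omega')}.
\]
Iterating the standard Poincar\'e inequality $\tau$ times on $v_j\in W^{\tau,p}_0(\Omega')$ produces
\[
\|v_j\|_{L^p(\Omega')} \le C_{\mathrm{P}}\,\epsilon^\tau\, \|v_j\|_{W^{\tau,p}(\Omega')},
\]
for a dimensional constant $C_{\mathrm{P}}$. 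Substituting back gives $\sum_j\|v_j\|_{W^{\tau,p}(\Omega')}\le C_2 C_{\mathrm{P}}\epsilon^\tau\sum_j\|v_j\|_{W^{\tau,p}(\Omega')}$, which forces $v\equiv 0$ as soon as $\epsilon$ is chosen small enough that $C_2 C_{\mathrm{P}}\epsilon^\tau<1$.

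The main obstacle is to argue that the constant $C_2$ appearing above does not blow up as $\Omega'$ shrinks toward $\bx_0$, since only then is the condition $C_2 C_{\mathrm{P}}\epsilon^\tau<1$ achievable. I would handle this via rescaling: setting $y=(\bx-\bx_0)/\epsilon$ sends $\Omega'$ onto a domain $\Omega''$ of unit size. Because the hypothesis \eqref{EQ:Douglis-Nirenberg Numbers Condition} forces $s_i\equiv 0$ and $t_j\equiv\tau$, the principal symbol of $\cA$ rescales homogeneously by $\epsilon^{-\tau}$ while lower-order perturbations pick up strictly smaller powers $\epsilon^{-k}$ with $k<\tau$; multiplying the rescaled equation by $\epsilon^{\tau}$ produces an operator on $\Omega''$ whose principal part converges uniformly to the constant-coefficient principal part of $\cA(\bx_0,D)$ and whose lower-order terms vanish as $\epsilon\to 0$. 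Applying \eqref{EQ:Elliptic Dirichlet Stability Estimate} on the fixed unit-size domain $\Omega''$ then yields a constant depending only on the ellipticity constant of $\cA_0(\bx_0,\bxi)$ and on the moduli of continuity of the coefficients on $B(\bx_0,\epsilon)$, both controlled uniformly. Undoing the rescaling gives an estimate for $v$ on $\Omega'$ with a $C_2$ that is bounded uniformly over all $\Omega'\subset B(\bx_0,\epsilon)$.

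Finally, to upgrade the established uniqueness to the sharpened estimate with $C_2=0$, I would argue by contradiction: a failure would produce a sequence of solutions $v_k$ of inhomogeneous problems normalized to $\sum_j\|v_{k,j}\|_{W^{\tau,p}}=1$ but with sources and boundary data of vanishing norm, and by the Rellich--Kondrachov embedding $W^{\tau,p}\hookrightarrow L^p$ combined with the estimate at $C_2>0$, $\{v_k\}$ would be Cauchy in $W^{\tau,p}$ and converge to a nontrivial solution of the homogeneous problem on $\Omega'$, contradicting the uniqueness just established.
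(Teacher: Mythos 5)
The paper does not prove this statement at all; it is quoted verbatim from \cite[Theorem 3.5]{Bal-CM13}, so there is no in-paper proof to compare against. That said, your argument is essentially the standard proof of the cited result and is sound in structure: the reduction to $v_j\in W^{\tau,p}_0(\Omega')$, the absorption of the $C_2\sum_j\|v_j\|_{L^p}$ term via the iterated Poincar\'e inequality with constant $O(\epsilon^\tau)$, and the compactness argument for the ``$C_2=0$'' refinement are all correct. The one step you should tighten is the uniformity of $C_2$ over the family of subdomains. After rescaling $y=(\bx-\bx_0)/\epsilon$, the image $\Omega''=(\Omega'-\bx_0)/\epsilon$ is \emph{not} a fixed domain --- it is merely some subdomain of the unit ball, possibly with small diameter or bad boundary geometry, and the constant in \eqref{EQ:Elliptic Dirichlet Stability Estimate} depends on $\partial\Omega''$, so applying the estimate ``on the fixed unit-size domain $\Omega''$'' does not by itself give a uniform constant. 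The standard repair uses exactly the membership $v_j\in W^{\tau,p}_0$ that you already established: extend $v$ by zero to the full unit ball (the zero extension of a $W^{\tau,p}_0$ function is $W^{\tau,p}$ and still solves the homogeneous equation a.e.), and then apply the a priori estimate on that genuinely fixed domain, or equivalently invoke the Agmon--Douglis--Nirenberg estimate for compactly supported functions, whose constant depends only on the ellipticity constant of $\cA_0(\bx_0,\bxi)$ and the moduli of continuity of the coefficients. With that modification your rescaling discussion of the principal versus lower-order terms goes through and the proof is complete.
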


{\small

\begin{thebibliography}{10}

\bibitem{AgDoNi-CPAM59}
{\sc S.~Agmon, A.~Douglis, and L.~Nirenberg}, {\em Estimates near the boundary
  for solutions of elliptic partial differential equations satisfying general
  boundary conditions. {I}}, Comm. Pure Appl. Math., 12 (1959), pp.~623--727.

\bibitem{AkBeDaElLiMi-JIIP17}
{\sc H.~Akhouayri, M.~Bergounioux, A.~{Da Silva}, P.~Elbau, A.~Litman, and
  L.~Mindrinos}, {\em Quantitative thermoacoustic tomography with microwaves
  sources}, J. Inverse Ill-Posed Probl., 25 (2017), pp.~703--717.

\bibitem{JeEl-IP20}
{\sc H.~Al~Jebawy and A.~El~Badia}, {\em Direct algorithm for reconstructing
  small absorbers in thermoacoustic tomography problem from a single data},
  Inverse Problems, 36 (2020), p.~065010.

\bibitem{Alberti-arXiv22}
{\sc G.~S. Alberti}, {\em Non-zero constraints in elliptic {PDE} with random
  boundary values and applications to hybrid inverse problems},
  arXiv:2205.00994,  (2022).

\bibitem{Ambrosio-IM04}
{\sc L.~Ambrosio}, {\em Transport equation and {Cauchy} problem for {BV} vector
  fields}, Inventiones Mathematicae, 158 (2004), p.~227.

\bibitem{AmGaJiNg-ARMA12}
{\sc H.~Ammari, J.~Garnier, W.~Jing, and L.~Nguyen}, {\em Quantitative
  thermo-acoustic imaging: An exact reconstruction formula}, Submitted to
  Archive for Rational Mechanics and Analysis,  (2011).

\bibitem{AsZh-JDE21}
{\sc Y.~M. Assylbekov and T.~Zhou}, {\em Inverse problems for nonlinear
  maxwell's equations with second harmonic generation}, Journal of Differential
  Equations, 296 (2021), pp.~148--169.

\bibitem{Bal-CM13}
{\sc G.~Bal}, {\em Hybrid inverse problems and redundant systems of partial
  differential equations}, in Inverse Problems and Applications, P.~Stefanov,
  A.~Vasy, and M.~Zworski, eds., vol.~615 of Contemporary Mathematics, American
  Mathematical Society, 2013, pp.~15--48.

\bibitem{BaRe-IP11}
{\sc G.~Bal and K.~Ren}, {\em Multi-source quantitative {PAT} in diffusive
  regime}, Inverse Problems, 27 (2011).
\newblock 075003.

\bibitem{BaRe-IP12}
\leavevmode\vrule height 2pt depth -1.6pt width 23pt, {\em On multi-spectral
  quantitative photoacoustic tomography in diffusive regime}, Inverse Problems,
  28 (2012).
\newblock 025010.

\bibitem{BaReUhZh-IP11}
{\sc G.~Bal, K.~Ren, G.~Uhlmann, and T.~Zhou}, {\em Quantitative
  thermo-acoustics and related problems}, Inverse Problems, 27 (2011).
\newblock 055007.

\bibitem{BaZh-IP14}
{\sc G.~Bal and T.~Zhou}, {\em Hybrid inverse problems for a system of
  {Maxwell’s} equations}, Inverse Problems, 30 (2014).
\newblock 055013.

\bibitem{BeBrPr-IP19}
{\sc M.~Bergounioux, {\'E}.~Bretin, and Y.~Privat}, {\em How to position
  sensors in thermo-acoustic tomography}, Inverse Problems, 35 (2019),
  p.~074003.

\bibitem{BoLiMaSc-IP17}
{\sc L.~Borcea, W.~Li, A.~Mamonov, and J.~C. Schotland}, {\em Second-harmonic
  imaging in random media}, Inverse Problems, 33 (2017).
\newblock 065004.

\bibitem{BoWo-Book99}
{\sc M.~Born and E.~Wolf}, {\em {Principles} of {Optics}: {Electromagnetic}
  {Theory} of {Propagation}, {Interference} and {Diffraction} of {Light}},
  Cambridge University Press, New York, 1999.

\bibitem{BoCr-SIAM06}
{\sc F.~Bouchut and G.~Crippa}, {\em Uniqueness, renormalization and smooth
  approximations for linear transport equations}, SIAM J. Math. Anal., 38
  (2006), pp.~1316--1328.

\bibitem{Boyd-Book20}
{\sc R.~W. Boyd}, {\em Nonlinear optics}, Academic press, 2020.

\bibitem{Choulli-arXiv2022}
{\sc M.~Choulli}, {\em Stable determination of the nonlinear term in a
  quasilinear elliptic equation by boundary measurements}, arXiv:2205.16000,
  (2022).

\bibitem{CoLe-DMJ02}
{\sc F.~Colombini and N.~Lerner}, {\em Uniqueness of continuous solutions for
  {BV} vector fields}, Duke Math. J., 111 (2002), pp.~357--384.

\bibitem{CrLiSh-M2AS23}
{\sc M.~Cristofol, S.~Li, and Y.~Shang}, {\em Carleman estimates and some
  inverse problems for the coupled quantitative thermoacoustic equations by
  partial boundary layer data. part ii: Some inverse problems}, Mathematical
  Methods in the Applied Sciences,  (2023).

\bibitem{DiLi-AM89}
{\sc R.~J. DiPerna and P.-L. Lions}, {\em On the {Cauchy} problem for
  {Boltzmann} equations: global existence and weak stability}, Ann. Math., 130
  (1989), pp.~321--366.

\bibitem{FeLiLi-arXiv21}
{\sc A.~Feizmohammadi, T.~Liimatainen, and Y.-H. Lin}, {\em An inverse problem
  for a semilinear elliptic equation on conformally transversally anisotropic
  manifolds}, arXiv:2112.08305,  (2021).

\bibitem{FrTeGaBlNeMa-JOSA15}
{\sc J.~Franc\'{e}s, J.~Tervo, S.~Gallego, S.~Bleda, C.~Neipp, and
  A.~M\'{a}rquez}, {\em Split-field finite-difference time-domain method for
  second-harmonic generation in two-dimensionally periodic structures}, J. Opt.
  Soc. Am. B, 32 (2015), pp.~664--669.

\bibitem{HaLi-NA23}
{\sc B.~Harrach and Y.-H. Lin}, {\em Simultaneous recovery of piecewise
  analytic coefficients in a semilinear elliptic equation}, Nonlinear Analysis,
  228 (2023), p.~113188.

\bibitem{Isakov-ARMA93}
{\sc V.~Isakov}, {\em On uniqueness in inverse problems for semilinear
  parabolic equations}, Arch. Rational Mech. Anal., 124 (1993), pp.~1--12.

\bibitem{Isakov-Book06}
\leavevmode\vrule height 2pt depth -1.6pt width 23pt, {\em Inverse {Problems}
  for {Partial} {Differential} {Equations}}, Springer-Verlag, New York,
  second~ed., 2006.

\bibitem{Kian-Nonlinearity23}
{\sc Y.~Kian}, {\em Lipschitz and h{\"o}lder stable determination of nonlinear
  terms for elliptic equations}, Nonlinearity, 36 (2023), p.~1302.

\bibitem{KrUh-arXiv19}
{\sc K.~Krupchyk and G.~Uhlmann}, {\em Partial data inverse problems for
  semilinear elliptic equations with gradient nonlinearities},
  arXiv:1909.08122v1,  (2019).

\bibitem{KrUh-PAMS20}
\leavevmode\vrule height 2pt depth -1.6pt width 23pt, {\em A remark on partial
  data inverse problems for semilinear elliptic equations}, Proceedings of the
  AMS,  (2019).

\bibitem{LaLi-NA22}
{\sc R.-Y. Lai and Y.-H. Lin}, {\em Inverse problems for fractional semilinear
  elliptic equations}, Nonlinear Analysis, 216 (2022), p.~112699.

\bibitem{LaReZh-SIAM22}
{\sc R.-Y. Lai, K.~Ren, and T.~Zhou}, {\em Inverse transport and diffusion
  problems in photoacoustic imaging with nonlinear absorption}, SIAM J. Appl.
  Math., 82 (2022), pp.~602--624.
\newblock arXiv:2107.08118.

\bibitem{LaLiLiSa-JMPA21}
{\sc M.~Lassas, T.~Liimatainen, Y.-H. Lin, and M.~Salo}, {\em Inverse problems
  for elliptic equations with power type nonlinearities}, Journal de
  math{\'e}matiques pures et appliqu{\'e}es, 145 (2021), pp.~44--82.

\bibitem{LuZh-arXiv23}
{\sc S.~Lu and J.~Zhai}, {\em Increasing stability of a linearized inverse
  boundary value problem for a nonlinear schr\"{o}dinger equation on
  transversally anisotropic manifolds}, arXiv:2301.07875,  (2023).

\bibitem{Solonnikov-JSM73}
{\sc V.~A. Solonnikov}, {\em Overdetermined elliptic boundary-value problems},
  J. Sov. Math., 1 (1973), pp.~477--512.

\bibitem{SzKi-JOSA18}
{\sc T.~Szarvas and Z.~Kis}, {\em Numerical simulation of nonlinear second
  harmonic wave generation by the finite difference frequency domain method},
  J. Opt. Soc. Am. B, 35 (2018), pp.~731--740.

\bibitem{UhZh-JMPA21}
{\sc G.~Uhlmann and J.~Zhai}, {\em On an inverse boundary value problem for a
  nonlinear elastic wave equation}, Journal de Math{\'e}matiques Pures et
  Appliqu{\'e}es, 153 (2021), pp.~114--136.

\bibitem{YuYa-JOSA13}
{\sc J.~Yuan and J.~Yang}, {\em Computational design for efficient
  second-harmonic generation in nonlinear photonic crystals}, J. Opt. Soc. Am.
  B, 30 (2013), pp.~205--210.

\bibitem{ZeHoLiKoMo-PRB09}
{\sc Y.~Zeng, W.~Hoyer, J.~Liu, S.~W. Koch, and J.~V. Moloney}, {\em Classical
  theory for second-harmonic generation from metallic nanoparticles}, Phys.
  Rev. B, 79 (2009).
\newblock 235109.

\end{thebibliography}

}


\end{document}